\newtheorem{theorem}{Theorem}
\newtheorem{proposition}{Proposition}
\newtheorem{lemma}{Lemma}
\newtheorem{remark}{Remark}
\def\reff#1{{\rm(\ref{#1})}}
\newcommand\cP{\mathcal P}
\newcommand\bA {\mathbb A}
\newcommand\EE {\mathbb E}
\newcommand\FF {\mathbb F}
\newcommand\HH {\mathbb H}
\newcommand\RR {\mathbb R}
\newcommand\PP {\mathbb P}
\newcommand\cA {\mathcal A}
\newcommand\cE {\mathcal E}
\newcommand\cF {\mathcal F}
\def\u{\underline}
\newcommand\PXt {{\mathbb P}_{X_t}}
\begin{document}

\title[Mean Field FBSDEs]{Mean Field Forward-Backward Stochastic Differential Equations}

\author{Ren\'e Carmona}
\address{ORFE, Bendheim Center for Finance, Princeton University,
Princeton, NJ  08544, USA.}
\email{rcarmona@princeton.edu}
\thanks{Partially supported  by NSF: DMS-0806591}

\author{Francois Delarue}
\address{Laboratoire Jean-Alexandre Dieudonn\'e 
Universit� de Nice Sophia-Antipolis 
Parc Valrose 
06108 Cedex 02, Nice, FRANCE}
\email{Francois.Delarue@unice.fr}

\subjclass[2000]{Primary }

\keywords{}

\date{November 12, 2012}

\begin{abstract}
The purpose of this note is to provide an existence result for the solution of fully coupled Forward Backward Stochastic Differential Equations (FBSDEs) 
of the mean field type. These equations occur in the study of mean field games and the optimal control of dynamics of the
McKean Vlasov type.
\end{abstract}

\maketitle

\section{\textbf{Introduction}}

Stochastic differential equations of the McKean - Vlasov type are It\^o's stochastic differential equations where the coefficients depend upon the marginal distribution of the solution. In their partial differential form, they were introduced by Mark Kac's in his analysis  of the Boltzmann equation for the density of particules in kinetic theory of dilute monatomic gases, and a  toy model for the Vlasov kinetic equation of plasma (see  \cite{Kac,Kac.book}).

The purpose of this note is to provide an existence result for the solution of Forward Backward Stochastic Differential Equations (FBSDEs) 
of the McKean-Vlasov type. Following the wave of interest created by the pathbreaking work of Lasry and Lions on mean field games \cite{MFG1,MFG2,MFG3}, simple forms of Backward Stochastic Differential Equations (BSDEs) of McKean Vlasov type have been introduced and called of  \emph{mean field type}. 
Fully coupled  FBSDEs are typically more involved and more difficult to solve than BSDEs. FBSDEs of mean field type occur naturally in the probabilistic analysis of mean field games and the optimal control of dynamics of the McKean Vlasov type as considered in \cite{CarmonaDelarue2,CarmonaDelarue3}. See also 
\cite{Bensoussanetal, CarmonaDelarueLachapelle,Yong3} for the particular case of Linear Quadratic (LQ) models. Detailed explanations on how these FBSDEs occur in these contexts and the particular models which were solved are given in Section \ref{se:applications} below.

The existence proofs given in  \cite{CarmonaDelarue2} and \cite{CarmonaDelarue3} depend heavily on the fact that the problems at hand are in fact stochastic control problems and FBSDEs are derived from an application of a version of the stochastic maximum principle, and the compactness estimates are derived from the linear nature of the forward dynamics and strong convexity properties of the cost functions of the stochastic optimization problems. The purpose of this note is to provide a general existence result which does not depend upon strong linearity and convexity assumptions. Such an existence result is proven in Section \ref{se:fbsde}. The proof relies on Schauder's fixed point theorem used in  appropriate  spaces of functions an measures. A short Section \ref{se:applications} concludes with a short discussion of applications to mean field games and control of McKean-Vlasov dynamics studied in  \cite{CarmonaDelarue2} and \cite{CarmonaDelarue3}.

\section{\textbf{Solvability of Forward-Backward Systems of McKean-Vlasov Type}}
\label{se:fbsde}

\subsection{First Notation}

All the processes considered in this note are defined on a probability space $(\Omega,\cF,\PP)$ on which an $m$-dimensional Wiener process $\u W=(W_t)_{0\le t\le T}$ is defined. For each random variable/vector or stochastic process $X$, we denote
by $\PP_X$ the law (alternatively called the distribution) of $X$.  We shall denote by $\FF=(\cF_t)_{0\le t\le T}$ the filtration of $\u W$ and by $\HH^{2,n}$ the Hilbert space
$$
\HH^{2,n}:=\Big\{Z\in\HH^{0,n};\;\EE\int_0^T|Z_s|^2ds<\infty\Big\}
$$
where $\HH^{0,n}$ stands for the collection of all $\RR^n$-valued progressively measurable processes on $[0,T]$. For any measurable space $(E,\cE)$, we denote by $\cP(E)$ the space of probability measures on $(E,\cE)$ assuming that the $\sigma$-field $\cE$  is understood. When $E$ is a normed space (most often $E=\RR^d$ in what follows), we denote by $\cP_p(E)$ the subspace of $\cP(E)$ of the probability measures of order $p$, namely those elements of $\cP(E)$ which integrate the $p$-th power of the distance to a fixed point (whose choice is irrelevant in the definition of $\cP_p(E)$).
For each $p\ge 1$, if $\mu$ and $\mu'$ are probability measures of order $p$, $W_p(\mu,\mu')$ denotes the $p$-Wasserstein's distance defined as
$$
W_p(\mu,\mu')=\inf\left\{\left[\int |x-y|_{E}^p\pi(dx,dy)\right]^{1/p};\;\pi\in\cP_p(E\times E) \text{ with marginals } \mu \text{ and } \mu'\right\}.
$$
Notice that if $X$ and $X'$ are random variables of order $2$ with values in $E$, then by definition we have 
$$
W_2(\PP_X,\PP_{X'})\le [\EE|X-X'|_{E}^2]^{1/2}.
$$

\subsection{Assumptions and Statement of the Main Existence Result}

Our goal is to solve fully coupled McKean-Vlasov  forward-backward systems of the general form:
\begin{equation}
\label{fo:fbsde}
\begin{split}
&dX_{t} = B\bigl(t,X_{t},Y_{t},Z_{t},\PP_{(X_{t},Y_{t})}\bigr) dt + \Sigma\bigl(t,X_{t},Y_{t},\PP_{(X_{t},Y_{t})}\bigr) dW_{t}
\\
&dY_{t} = -F\bigl(t,X_{t},Y_{t},Z_{t},\PP_{(X_{t},Y_{t})}\bigr) dt + Z_{t} dW_{t}, \quad 0 \leq t \leq T,
\end{split}
\end{equation}
with initial condition $X_{0}=x_{0}$ for a given deterministic point $x_{0} \in \RR^d$, and  terminal condition $Y_{T} = G(X_{T},\PP_{X_{T}})$. Here, the unknown processes $(\u X,\u Y,\u Z)$ are of dimensions $d$, $p$ and $p \times m$ respectively, the coefficients $B$ and $F$ map 
$[0,T] \times \RR^d \times \RR^p \times \RR^{p \times m} \times {\mathcal P}_2(\RR^{d} \times \RR^{p})$ 
into $\RR^d$ and $\RR^p$ respectively, while the coefficient $\Sigma$ maps $[0,T] \times \RR^d \times \RR^p \times {\mathcal P}_2(\RR^{d} \times \RR^{p})$  into $\RR^{d \times m}$ and the function $G$ giving the terminal condition maps $\RR^d \times {\mathcal P}_2(\RR^d)$ into $\RR^p$, all these functions being assumed to be Borel-measurable. Recall that  
the spaces ${\mathcal P}_2(\RR^d \times \RR^p)$ and ${\mathcal P}_{2}(\RR^{d})$ are assumed to be endowed with the 
topology of the $2$-Wasserstein distance $W_2$ defined earlier. 

\vskip 4pt
We now state the standing assumptions of the paper. We assume that, for any $t \in [0,T]$, the coefficients $B(t,\cdot)$, $F(t,\cdot)$, $\Sigma(t,\cdot)$ and $G$ satisfy:
\vskip 4pt
(A1). The coefficients $B(t,\cdot)$, $F(t,\cdot)$, $\Sigma(t,\cdot)$ and $G$ are
Lipschitz continuous, the Lipschitz property with respect to the measure arguments holding for the $2$-Wasserstein distance. Precisely, we assume that there exists a constant $L \geq 1$, such that for any $t \in [0,T]$,  $x,x' \in \RR^d$, $y,y' \in \RR^p$, $z,z' \in \RR^{p \times m}$ and $\mu,\mu' \in {\mathcal P}_{2}(\RR^d \times \RR^p)$, 
\begin{eqnarray*}
&&\vert B(t,x',y',z',\mu') - B(t,x,y,z,\mu) \vert +\vert F(t,x',y',z',\mu') - F(t,x,y,z,\mu) \vert\\
&&\phantom{?????}+\vert \Sigma(t,x',y',\mu') - \Sigma(t,x,y,\mu) \vert+\vert G(x',\mu') - G(x,\mu) \vert\\
&&\phantom{??????????????????????}\leq L \bigl[ \vert (x,y,z) - (x',y',z') \vert + W_{2}(\mu,\mu') \bigr].
\end{eqnarray*}

\vskip 4pt
(A2) The functions $\Sigma$ and $G$ are bounded, the common bound being also denoted by $L$. Moreover,  for any $t \in [0,T]$, $x \in \RR^d$, $y \in \RR^p$, $z \in \RR^{p \times m}$ and $\mu \in {\mathcal P}_{2}(\RR^{d \times p})$, 
\begin{equation*}
\begin{split}
&\vert B(t,x,y,z,\mu) \vert \leq L \biggl[ 1 + \vert x \vert + \vert y \vert + \vert z \vert + \biggl( \int_{\RR^d \times \RR^p}  \vert (x',y') \vert^2 d\mu(x',y')
\biggr)^{1/2} \biggr],
\\
&\vert F(t,x,y,z,\mu) \vert \leq L \biggl[ 1 + \vert y \vert + \biggl( \int_{\RR^d \times \RR^p} \vert y' \vert^2 d\mu(x',y')
\biggr)^{1/2} \biggr].
\end{split}
\end{equation*}

\vskip 4pt
(A3) The function $\Sigma$ is uniformly elliptic in the sense that 
for any $t \in [0,T]$, $x \in \RR^d$, $y \in \RR^p$ and 
$\mu \in {\mathcal P}_{2}(\RR^d \times \RR^p)$ the following inequality holds 
\begin{equation*}
\Sigma(t,x,y,\mu)\Sigma(t,x,y,\mu)^\dagger  \geq L^{-1} I_d 
\end{equation*}
in the sense of symmetric matrices, where $I_d$ is the $d$-dimensional identity matrix. Here and throughout the paper, we use the exponent ${}^\dagger$ to denote the transpose of a matrix. Moreover, the function $[0,T] \ni t \hookrightarrow \Sigma(t,0,0,\delta_{(0,0)})$ is also assumed to be continuous.

\vskip 4pt
We can now state the main result of the paper.

\begin{theorem}
\label{th:mkv_fbsde}
Under (A1--3), the FBSDE \eqref{fo:fbsde} has a solution.
\end{theorem}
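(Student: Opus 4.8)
The plan is to prove existence via Schauder's fixed point theorem, as announced in the introduction. The natural object to iterate on is the flow of marginal laws $t \mapsto \mu_t := \PP_{(X_t,Y_t)}$. Given such a flow $\mu = (\mu_t)_{0 \le t \le T}$ — a continuous map from $[0,T]$ into $\cP_2(\RR^d \times \RR^p)$ — one freezes the measure arguments in the coefficients, obtaining a classical (non-McKean-Vlasov) FBSDE
\begin{equation*}
\begin{split}
&dX_t = B\bigl(t,X_t,Y_t,Z_t,\mu_t\bigr)\,dt + \Sigma\bigl(t,X_t,Y_t,\mu_t\bigr)\,dW_t,\\
&dY_t = -F\bigl(t,X_t,Y_t,Z_t,\mu_t\bigr)\,dt + Z_t\,dW_t, \qquad Y_T = G(X_T,\mu_T),
\end{split}
\end{equation*}
with $X_0 = x_0$. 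The first step is to show this frozen FBSDE is uniquely solvable, and here the key structural inputs (A2)–(A3) enter: $\Sigma$ and $G$ are bounded, $F$ has no $x$- and only linear $y$-growth, and $\Sigma$ is uniformly elliptic. Under these hypotheses one can invoke the now-classical solvability theory for non-degenerate FBSDEs with bounded terminal data (the four-step scheme / quasilinear PDE approach of Ma–Protter–Yong and Delarue), which yields a unique solution $(\u X,\u Y,\u Z)$ with $\u Y$ represented as $Y_t = u(t,X_t)$ for a bounded Lipschitz decoupling field $u$, together with a priori bounds on $\|\u Y\|_\infty$ and on $\EE\int_0^T|Z_t|^2dt$ depending only on $L,T,d,p$ and $|x_0|$ — crucially \emph{not} on the particular flow $\mu$.

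This defines a map $\Phi: \mu \mapsto (\PP_{(X_t,Y_t)})_{0 \le t \le T}$ on the space of continuous flows of measures, and the second step is to set up the Schauder framework. I would work in the space $\cC([0,T];\cP_2(K))$, or rather impose a compactness constraint: since $|Y_t| = |u(t,X_t)|$ is bounded by a constant $M$ depending only on the data, the $Y$-marginals all live in $\cP_2$ of a fixed compact ball, while the $X$-marginals are not compactly supported but satisfy uniform moment bounds of order $> 2$ (say order $4$, obtained from the linear growth of $B$ and boundedness of $\Sigma$ via a standard Gronwall/BDG argument) and a uniform $1/2$-Hölder-in-time bound in $W_2$ (again from BDG and the growth bounds). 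The set of flows satisfying these uniform moment and Hölder estimates is a convex, compact subset of $\cC([0,T];\cP_2(\RR^d \times \RR^p))$ — compactness following from tightness plus uniform integrability of the second moments together with the Arzelà–Ascoli argument in time — and $\Phi$ maps this set into itself by the a priori estimates of step one.

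The third step is continuity of $\Phi$ on this compact convex set. This is where I expect the main obstacle to lie. One must show that if $\mu^n \to \mu$ in $\cC([0,T];\cP_2)$, then the solutions $(\u X^n,\u Y^n,\u Z^n)$ of the corresponding frozen FBSDEs converge, so that the output flows converge in $W_2$ uniformly in time. The difficulty is the usual one for fully coupled FBSDEs: one cannot directly estimate $\EE\sup_t|X^n_t - X_t|^2$ and $\EE\sup_t|Y^n_t - Y_t|^2$ by a naive Gronwall argument because of the coupling. The clean route is via the decoupling fields: stability estimates for the associated quasilinear parabolic PDEs (or equivalently for the FBSDE decoupling field) give that $u^n \to u$ locally uniformly, with uniform Lipschitz bounds, and then the forward SDEs $dX^n_t = B(t,X^n_t,u^n(t,X^n_t),\ldots,\mu^n_t)dt + \Sigma(\ldots)dW_t$ converge by standard SDE stability, whence convergence of the marginals. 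One must be a little careful that the Lipschitz/boundedness constants for $u^n$ are uniform in $n$ — this again follows from (A1)–(A3) being uniform in the frozen measure argument. Granting continuity, Schauder's theorem produces a fixed point $\mu^*$, and the solution of the frozen FBSDE for $\mu = \mu^*$ is then, by construction, a solution of the McKean-Vlasov FBSDE \eqref{fo:fbsde}. I would also remark that uniqueness is not claimed and indeed need not hold at this level of generality.
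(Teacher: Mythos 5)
Your overall architecture — freeze the measure argument, solve the resulting standard non-degenerate FBSDE via its decoupling field, and run Schauder on a compact convex set of measure flows — is the same as the paper's, and your parametrization of the fixed point by the single flow $t \mapsto \PP_{(X_t,Y_t)}$ is in fact a little leaner than the paper's, which iterates on the \emph{pair} (decoupling field $\varphi$, law of $\u X$ on path space) and reconstructs the joint law as $\varphi(t,\cdot)\diamond\PP_{X_t}$. In the regime where $B$ and $F$ are bounded, your scheme would go through essentially as the paper's Proposition \ref{pr:bdedexistence} does (modulo the standard technicality that $\cC([0,T];\cP_2)$ must be embedded in a locally convex vector space before Schauder applies — the paper uses the Kantorovitch--Rubinstein norm for this).

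The genuine gap is in your first step. You assert that the frozen FBSDE is covered by the ``classical'' solvability theory under (A1)--(A3) with a priori bounds independent of the input flow, but (A2) allows $B$ to grow \emph{linearly in $x$}, and the cited non-degenerate theory (Delarue's Theorem 2.6 and Corollary 2.8) requires the drift to be bounded in $x$; the paper says this explicitly right after Lemma \ref{lem:1}. This is not a citation quibble: the entire last subsection of the paper is devoted to it. One must truncate $B$ and $F$, solve the truncated McKean--Vlasov problems by the bounded-case fixed point, and then pass to the limit — which requires (i) a uniform $L^\infty$ bound on $Y^n$ obtained by a Gronwall/comparison argument that crucially exploits the McKean--Vlasov consistency $\int |y'|^2\,d\PP_{(X^n_t,Y^n_t)} = \EE|Y^n_t|^2$ (for an \emph{arbitrary} frozen input flow, the bound on the output $Y$ through $F$'s dependence on the $y$-marginal's second moment does not close into a self-mapped set when $T$ is large), and (ii) uniform-in-$n$ interior regularity of $u^n$ \emph{and} $\partial_x u^n$ from local quasilinear PDE estimates, needed to identify the limit of $Z^n_t = \partial_x u^n(t,X^n_t)\Sigma(\cdots)$ and to pass to the limit in the forward equation. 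Neither of these ingredients appears in your plan, and without them the argument stops at the bounded case. A secondary, fixable point: your convex compact set of flows should control the $y$-marginals by the a priori bound on $Y$ (not merely by second moments), precisely so that the $F$-growth in (A2) stays uniformly controlled over the input set.
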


The strategy of the proof was sketched in a simpler setting in \cite{CarmonaDelarueLachapelle}. We review this strategy before giving the details. Because of the Markovian nature of the set-up, we expect $Y_{t}$ and $X_{t}$ to be connected by a deterministic relationship of the form 
$Y_{t} = \varphi(t,X_{t})$, $\varphi$ being a function from $[0,T] \times \RR^d$ into $\RR^p$ usually called the FBSDE value function. If this is the case, the law of the pair $(X_{t},Y_{t})$ is entirely determined by the law of $X_{t}$ since the distribution $\PP_{(X_t,Y_t)}$ of $(X_{t},Y_{t})$ is equal to $(I_{d},\varphi(t,\cdot))(\PP_{X_{t}})$. For a random variable $X$ with values in $\RR^d$ and for a measurable mapping $\psi$ from $\RR^d$ into $\RR^p$, we shall
denote by $\psi \diamond \PP_{X}$ the image of the distribution $\PP_X$ of $X$ under the map $(I_{d},\psi): \RR^d\ni x\hookrightarrow (x,\psi(x))\in\RR^d\times\RR^p$. With this notation in hand, it is natural to look for a function $\varphi : [0,T] \times \RR^d \rightarrow \RR^p$ such that 
\begin{equation}
\label{fo:fbsde'}
\begin{split}
&dX_{t} = B\bigl(t,X_{t},Y_{t},Z_{t},\varphi(t,\cdot) \diamond \PP_{X_{t}}\bigr) dt + \Sigma\bigl(t,X_{t},Y_{t},\varphi(t,\cdot) \diamond \PP_{X_{t}}\bigr) dW_{t}
\\
&d Y_{t} = - F\bigl(t,X_{t},Y_{t},Z_{t},\varphi(t,\cdot) \diamond \PP_{X_{t}}\bigr) dt + Z_{t} dW_{t}, \quad 0 \leq t \leq T,
\end{split}
\end{equation}
under the constraint that $Y_t=\varphi(t,X_{t})$ for $t \in [0,T]$. 
Translating the above into a nonlinear PDE, $\varphi$ appears as the solution to a nonlinear PDE of the McKean-Vlasov type. The strategy we use below  consists in recasting the stochastic system \eqref{fo:fbsde'} into a well-posed fixed point problem over the arguments $(\varphi,(\PP_{X_{t}})_{0 \leq t \leq T})$. The first step is to use $\varphi(t,\cdot) \diamond \PP_{X_{t}}$ as an input and then to solve \eqref{fo:fbsde'} as a standard FBSDE. In order to do so, we  use known existence results for standard FBSDEs. 

\begin{remark}
The proof of Theorem \ref{th:mkv_fbsde} given in this note can be used to derive existence of a solution of \eqref{fo:fbsde} when the law $\PP_{(X_{t},Y_{t})}$ is replaced by $\PP_{(X_{t},Y_{t},Z_t)}$. Indeed, $Z_t$ is also given by a function of $X_t$ in the same way $Y_t$ is, since 
$Z_t= v(t,X_{t})$ with $v(t,x)
= \partial_x u(t,x) \Sigma(t,x,u(t,x),u(t,\cdot) \diamond \PP_{X_{t}})$ whenever $Y_t=u(t,X_t)$ (we prove below that $\partial_{x}u$ makes sense). We refrain from giving the details to keep the technicalities to a minimum, and because we do not know of a practical application of such a generalization.
\end{remark}

\subsection{Preliminary }
\label{sub:bdedcase}

 Our fixed point argument relies on the following lemma which puts together the existence and uniqueness result contained in Theorem 2.6 of Delarue 
\cite{Delarue02} and the control of the FBSDE value function provided by Corollary 2.8 of \cite{Delarue02}:

\begin{lemma}
\label{lem:1}
On the top of (A1-3), let us also assume that $B$ and $F$ are bounded by $L$. Then, given a probability measure $\nu' \in {\mathcal P}_{2}(\RR^d)$, a deterministic continuous function $\nu : [0,T] \ni t \hookrightarrow \nu_{t} \in {\mathcal P}_{2}(\RR^{d} \times \RR^{p})$, and an initial condition $(t,x) \in [0,T] \times \RR^d$,  the forward-backward system
\begin{equation}
\label{fo:regular_fbsde}
\begin{split}
&dX_{s} = B\bigl(s,X_{s},Y_{s},Z_{s},\nu_{s}\bigr) ds + \Sigma(s,X_{s},Y_{s},\nu_{s}) dW_{s}
\\
&dY_{s} = - F\bigl(s,X_{s},Y_{s},Z_{s},\nu_{s}\bigr) ds + Z_{s} dW_{s}, \quad t \leq s \leq T,
\end{split}
\end{equation}
with $X_{t}=x$ as initial condition and $Y_{T}=G(X_{T},\nu')$ as terminal condition, has a unique solution, denoted by 
$(X_{s}^{t,x},Y_{s}^{t,x},Z_{s}^{t,x})_{t \leq s \leq T}$. Moreover, 
the FBSDE value function $u : [0,T] \times \RR^d \ni (t,x) \hookrightarrow Y_{t}^{t,x}\in\RR^p$ is bounded by a
constant $\gamma$ depending only upon $T$ and $L$, and is $1/2$-H\"older continuous in time
and Lipschitz continuous in space in the sense that:
\begin{equation*}
\vert u(t,x) - u(t',x') \vert \leq \Gamma \bigl( \vert t - t' \vert^{1/2} + \vert x-x' \vert \bigr),
\end{equation*}
for some constant $\Gamma$ only depending upon $T$ and $L$. In particular, both $\gamma$ and $\Gamma$ are independent of $\nu'$ and $\nu$. Finally, it holds $Y_{s}^{t,x}=u(s,X_{s}^{t,x})$
for any $t \leq s \leq T$. 
\end{lemma}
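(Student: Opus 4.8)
The plan is to notice that, once the measure arguments in \eqref{fo:regular_fbsde} are frozen along the prescribed data $\nu$ and $\nu'$, the system becomes a \emph{classical} (non mean-field) forward-backward SDE whose coefficients inherit, uniformly in $\nu,\nu'$, exactly the structure needed to apply the existence/uniqueness theory and the a priori estimates of \cite{Delarue02}. Concretely, I would fix $\nu' \in \cP_2(\RR^d)$ and a continuous flow $\nu=(\nu_s)_{0\le s\le T}$ in $\cP_2(\RR^d\times\RR^p)$, and set
\[
\tb(s,x,y,z):=B(s,x,y,z,\nu_s),\qquad \tilde F(s,x,y,z):=F(s,x,y,z,\nu_s),\qquad \tilde\Sigma(s,x,y):=\Sigma(s,x,y,\nu_s),
\]
together with $\tilde G(x):=G(x,\nu')$; then \eqref{fo:regular_fbsde} is the standard FBSDE with data $(\tb,\tilde F,\tilde\Sigma,\tilde G)$, and existence, uniqueness and the Markovian representation $Y^{t,x}_s=u(s,X^{t,x}_s)$ come from Theorem 2.6 of \cite{Delarue02}, while the bound and the H\"older/Lipschitz regularity of $u$ come from Corollary 2.8 of \cite{Delarue02}.

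The real content of the proof is the verification that $(\tb,\tilde F,\tilde\Sigma,\tilde G)$ satisfy the hypotheses of those two results with constants depending only on $L$ and $T$. By (A1), freezing the measure kills the $W_2$ term, so $\tb(s,\cdot),\tilde F(s,\cdot),\tilde\Sigma(s,\cdot)$ and $\tilde G$ are $L$-Lipschitz in $(x,y,z)$, uniformly in $s$; by (A2) and the extra standing assumption of the lemma, $\tb,\tilde F,\tilde\Sigma,\tilde G$ are all bounded by $L$; by (A3), $\tilde\Sigma\tilde\Sigma^\dagger\ge L^{-1}I_d$. Measurability of $B,F$ and measurability of $s\mapsto\nu_s$ give measurability in $s$ of $\tb,\tilde F$, while the continuity of $t\mapsto\Sigma(t,0,0,\delta_{(0,0)})$ in (A3), combined with the Lipschitz dependence of $\Sigma$ on its measure argument and the continuity of $\nu$, yields continuity of $s\mapsto\tilde\Sigma(s,\cdot)$, which is the only time-regularity the cited results ask of the diffusion coefficient. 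None of these quantities involves $\nu$ or $\nu'$.

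Granting the verification, Theorem 2.6 of \cite{Delarue02} gives the unique triple $(X^{t,x}_s,Y^{t,x}_s,Z^{t,x}_s)_{t\le s\le T}$ and the deterministic function $u$ with $Y^{t,x}_s=u(s,X^{t,x}_s)$ for every $t\le s\le T$ (the flow identity following from uniqueness together with the Markov property of the frozen system), and Corollary 2.8 of \cite{Delarue02} provides constants $\gamma$ and $\Gamma$ — expressed solely through the Lipschitz constant, the sup-bounds of the coefficients, the ellipticity constant, and $T$ — that bound $\|u\|_\infty$ and the $1/2$-H\"older-in-time/Lipschitz-in-space seminorm of $u$. Since these structural inputs are all controlled by $L$ and $T$ alone, $\gamma$ and $\Gamma$ are automatically independent of $\nu$ and $\nu'$, which is exactly the uniformity that the subsequent fixed point argument needs.

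The step I expect to demand the most care is not an obstacle so much as careful bookkeeping: confirming that the constants furnished by \cite{Delarue02} are genuinely structural — in particular that the gradient (Lipschitz-in-space) bound and the time-regularity of $u$ depend on $\Sigma$ only through its bound and its ellipticity constant, and not through a modulus of continuity in time — so that the a priori estimates transfer uniformly over the whole admissible class of inputs $(\nu,\nu')$. The reduction to the classical setting and the continuity check on $\tilde\Sigma$ are routine.
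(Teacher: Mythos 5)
Your proposal is correct and follows exactly the paper's own route: the paper proves Lemma~\ref{lem:1} precisely by freezing the measure arguments and invoking Theorem~2.6 and Corollary~2.8 of \cite{Delarue02}, noting only in passing that the hypotheses are met with constants depending on $L$ and $T$ alone. Your more detailed verification of the frozen coefficients' Lipschitz, boundedness, ellipticity and time-continuity properties is exactly the bookkeeping the paper leaves implicit.
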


The boundedness assumption on $B$ and $F$ is stronger than what is necessary for the result of Lemma \ref{lem:1} to hold. For instance, the result of \cite{Delarue02} only requires that the bound (A2) holds without $|x|$ in the right hand side. We will not need this extra level of generality. We use this existence result in the following way. We start with a bounded continuous function $\varphi$ from $[0,T]\times\RR^d$ into $\RR^p$ and a probability measure  $\mu$ on $C([0,T],\RR^d)$ which we want to think of as the law $\PP_X$ of the solution, we denote by $\mu_t$ its marginal distributions on $\RR^d$, and we apply the above existence result for \eqref{fo:regular_fbsde} to $\nu'=\mu_T$ and $\nu_t=\varphi(t,\cdot)\diamond\mu_t$ for $t\in[0,T]$ and solve:
\begin{equation}
\label{fo:our_fbsde}
\begin{split}
&dX_{t} = B\bigl(t,X_{t},Y_{t},Z_{t},\varphi(t,\cdot) \diamond \mu_{t}\bigr) dt + \Sigma(t,X_{t},Y_{t},\varphi(t,\cdot) \diamond \mu_{t}) dW_{t}
\\
&d Y_{t}= - F\bigl(t,X_{t},Y_{t},Z_{t},\varphi(t,\cdot) \diamond \mu_{t}\bigr) dt + Z_{t} dW_{t}, \quad 0 \leq t \leq T,
\end{split}
\end{equation}
with the terminal condition $Y_{T} = g(X_{T},\mu_{T})$ and a prescribed initial condition $X_{0}= x_{0} \in \RR^d$.  The following estimate will be instrumental in the proof of the main result.

\begin{lemma}
\label{le:W2est} 
On the top of (A1-3), let us also assume that $B$ and $F$ are bounded by $L$. Then, there exists a positive constant $\Gamma$, depending on $T$ and $L$ only, such that, for any inputs $(\varphi,\mu)$ and $(\varphi',\mu')$ as above, the processes
$(X,Y,Z)$ and $(X',Y',Z')$ obtained by solving \eqref{fo:our_fbsde} with $(\varphi,\mu)$ and $(\varphi',\mu')$ respectively, satisfy
\begin{equation}
\label{fo:W2est}
\begin{split}
&{\mathbb E} \bigl[ \sup_{0 \leq t \leq T}\vert X_{t} - X_{t}' \vert^2 \bigr]
+ {\mathbb E} \bigl[ \sup_{0 \leq t \leq T}\vert Y_{t} - Y_{t}' \vert^2 \bigr]
+ {\mathbb E} \int_0^T \vert Z_{t} - Z_{t}' \vert^2 dt
\\
&\hspace{15pt} \leq \Gamma \biggl(  W_{2}(\mu_{T},\mu_{T}')^2 + \int_{0}^T W_{2}(\varphi(t,\cdot) \diamond \mu_{t},\varphi'(t,\cdot) \diamond \mu_{t}')^2 dt \biggr).
\end{split}
\end{equation}
\end{lemma}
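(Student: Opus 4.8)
The plan is to derive the estimate \reff{fo:W2est} by a fairly standard stability argument for FBSDEs, exploiting the Lipschitz regularity of the coefficients (A1) together with the a priori bound on the FBSDE value function from Lemma \ref{lem:1}. Write $(\Delta X,\Delta Y,\Delta Z)=(X-X',Y-Y',Z-Z')$. Both triples solve a forward-backward system of the type \reff{fo:regular_fbsde}, driven by the measure flows $\nu_t=\varphi(t,\cdot)\diamond\mu_t$, $\nu_t'=\varphi'(t,\cdot)\diamond\mu_t'$ and with terminal conditions $G(X_T,\mu_T)$, $G(X_T',\mu_T')$. The point is that since $B$ and $F$ are now assumed bounded and Lipschitz and $\Sigma$ is uniformly elliptic, the system falls within the scope of Delarue's theory, and moreover the representation $Y_t=u(t,X_t)$, $Y_t'=u'(t,X_t')$ holds with $u,u'$ uniformly Lipschitz in $x$ with constant $\Gamma$ independent of the inputs.

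The key steps, in order, are as follows. First I would write the forward equation for $\Delta X$, apply It\^o's formula to $|\Delta X_t|^2$, and use the Lipschitz property of $B$ and $\Sigma$ in all their arguments to bound the drift and diffusion increments by $|\Delta X_t|+|\Delta Y_t|+|\Delta Z_t|+W_2(\nu_t,\nu_t')$; here the increment of the measure argument is controlled using $W_2(\nu_t,\nu_t')\le W_2(\varphi(t,\cdot)\diamond\mu_t,\varphi'(t,\cdot)\diamond\mu_t')$. Second, I would treat the backward equation: apply It\^o to $|\Delta Y_t|^2$, integrate from $t$ to $T$, take expectations, and use the Lipschitz bound on $F$ and on the terminal condition $G$ — noting $|G(X_T,\mu_T)-G(X_T',\mu_T')|\le L(|\Delta X_T|+W_2(\mu_T,\mu_T'))$ — to get an estimate for $\EE|\Delta Y_t|^2+\EE\int_t^T|\Delta Z_s|^2ds$ in terms of $\EE\sup|\Delta X|^2$, $\EE\int|\Delta Y|^2$ and the two $W_2$ source terms. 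Third, and crucially, I would use the representation $\Delta Y_t = u(t,X_t)-u'(t,X_t')$ together with the uniform Lipschitz bound on $u$: writing $u(t,X_t)-u'(t,X_t')=[u(t,X_t)-u(t,X_t')]+[u(t,X_t')-u'(t,X_t')]$, the first bracket is $\le\Gamma|\Delta X_t|$, so that $\EE|\Delta Y_t|^2\le C\,\EE|\Delta X_t|^2 + C\,\EE|(u-u')(t,X_t')|^2$, which closes the loop on $\Delta Y$ in terms of $\Delta X$ plus a control on $u-u'$. Finally, I would assemble the forward and backward estimates, absorb small terms, apply Gr\"onwall on $\EE\sup_{s\le t}|\Delta X_s|^2$ for $T$ small and then chain over subintervals (as in Delarue \cite{Delarue02}) to remove the smallness restriction, and invoke Burkholder-Davis-Gundy to pass from $\EE|\Delta Y_t|^2$ and $\EE|\Delta X_t|^2$ to the supremum norms on the left-hand side of \reff{fo:W2est}.

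The main obstacle I anticipate is precisely the coupling in the third step: $\Delta Y_t$ cannot be controlled by $\Delta X_t$ alone, because the two triples solve FBSDEs with genuinely different value functions $u\ne u'$ (the inputs $\varphi,\varphi'$ and $\mu,\mu'$ differ), so one must carefully track the discrepancy $u-u'$ and feed it back through the backward equation without creating a circular estimate. The clean way around this is to not try to compare the value functions pointwise, but rather to run the stability argument directly on the two solution triples of \reff{fo:our_fbsde} — i.e. treat it as a single coupled system for $(\Delta X,\Delta Y,\Delta Z)$ with source terms $W_2(\mu_T,\mu_T')$ and $\int_0^T W_2(\varphi(t,\cdot)\diamond\mu_t,\varphi'(t,\cdot)\diamond\mu_t')^2dt$, and apply the quantitative well-posedness estimate for Lipschitz FBSDEs under the nondegeneracy condition (A3), exactly in the form behind Theorem 2.6 of \cite{Delarue02}; the H\"older/Lipschitz bound on $u$ is what guarantees the relevant a priori bounds are uniform, hence that the constant $\Gamma$ depends only on $T$ and $L$. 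The time-horizon smallness issue, standard for coupled FBSDEs, is handled by the usual induction over a partition of $[0,T]$ whose mesh depends only on $L$.
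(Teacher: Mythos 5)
Your overall skeleton matches the paper's: use the small-time stability estimate (Theorem 1.3 of \cite{Delarue02}) together with (A1) to get \reff{fo:W2est} for $T$ small, then extend to arbitrary $T$ via a partition $0=T_0<\cdots<T_N=T$ whose mesh depends only on $L$. The genuine gap is in the induction over the partition, which is the only non-trivial part of the proof and which you dismiss as ``the usual induction''. On an interior subinterval $[T_i,T_{i+1}]$ the two subproblems do \emph{not} share the terminal condition $G$: their terminal data are the value functions $u(T_{i+1},\cdot)$ and $u'(T_{i+1},\cdot)$, so the small-time estimate produces on its right-hand side the term $\EE[\vert u(T_{i+1},X_{T_{i+1}})-u'(T_{i+1},X_{T_{i+1}})\vert^2]$, which must itself be bounded by the global source terms $W_2(\mu_T,\mu_T')^2+\int W_2(\varphi(t,\cdot)\diamond\mu_t,\varphi'(t,\cdot)\diamond\mu_t')^2dt$ before anything can be chained. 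A direct It\^o/Gr\"onwall argument on the coupled system for $(\Delta X,\Delta Y,\Delta Z)$ cannot supply this for large $T$ --- the coupling constants are not small --- which is exactly why the partition is needed in the first place.

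The paper closes the induction by a device you explicitly renounce (``the clean way around this is to not try to compare the value functions pointwise''): a backward induction on $\sup_{x}\vert u(T_i,x)-u'(T_i,x)\vert$. Since the subinterval estimate holds for \emph{every} initial condition, one restarts both systems from the same deterministic point $X_{T_i}=X'_{T_i}=x$; this kills the $\EE[\vert X_{T_i}-X'_{T_i}\vert^2]$ term and turns the stability estimate itself into a sup-norm bound on $u(T_i,\cdot)-u'(T_i,\cdot)$, which is then fed into the estimate on $[T_{i-1},T_i]$, starting from the last interval where the terminal data really are $G(\cdot,\mu_T)$ and $G(\cdot,\mu_T')$. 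A forward chaining of the $\EE[\vert X_{T_i}-X'_{T_i}\vert^2]$ terms then yields \reff{fo:W2est}. Ironically, your ``third step'' (bounding $\EE\vert\Delta Y_t\vert^2$ by $\EE\vert\Delta X_t\vert^2$ plus $\EE\vert(u-u')(t,X_t')\vert^2$) was pointing in the right direction; what is missing is the observation that the freedom in the choice of the initial condition is precisely what converts the stability estimate into the required uniform control of $u-u'$ at the partition points.
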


\begin{proof}
For small time $T>0$, this estimate follows immediately from the main estimate Theorem 1.3 p. 218 of  \cite{Delarue02} and the Lipschitz assumption (A1). We only need to show that one can extend it to arbitrarily large values of $T$. Notice that Lemma \ref{lem:1} gives the existence of the FBSDE values functions $u$ and $u'$ such that $Y_t=u(t,X_t)$ and $Y'_t=u(t,X'_t)$ for all $t\in[0,T]$.

As in Corollary 2.8 of  \cite{Delarue02}, we choose a regular subdivision $0=T_0<T_1<\cdots<T_{N-1}<T_N=T$ so that the common length of the intervals $[T_i,T_{i+1}]$ is small enough in order to apply the main estimate Theorem 1.3 p. 218 of  \cite{Delarue02}. For any $i\in\{0,\cdots,N-1\}$ we have:
\begin{equation}
\label{fo:Ei}
\begin{split}
&\EE\bigl[ \sup_{T_i \leq t \leq T_{i+1}}\vert X_{t} - X_{t}' \vert^2 \bigr]
+ {\mathbb E} \bigl[ \sup_{T_i \leq t \leq T_{i+1}}\vert Y_{t} - Y_{t}' \vert^2 \bigr]
+ {\mathbb E} \int_{T_i}^{T_{i+1}} \vert Z_{t} - Z_{t}' \vert^2 dt
\\
&\hspace{25pt} \leq \Gamma \biggl( \EE[|X_{T_i}-X'_{T_i}|^2]+\EE[|u(T_{i+1},X_{T_{i+1}})-u' (T_{i+1},X_{T_{i+1}})|^2]\\
&\hspace{45pt} + \int_{T_i}^{T_{i+1}} W_{2}(\varphi(t,\cdot) \diamond \mu_{t},\varphi'(t,\cdot) \diamond \mu_{t}')^2 dt \biggr).
\end{split}
\end{equation}
We first consider the last interval $[T_{N-1},T_N]$ corresponding to the case $i=N-1$. Since $T_N=T$ we have $u(T,\cdot)=G(\cdot,\mu_T)$ and $u'(T,\cdot)=G(\cdot,\mu'_T)$ so that using the Lipschitz property of $G$ we get:
\begin{equation*}
\begin{split}
&\EE\bigl[ \sup_{T_{N-1} \leq t \leq T}\vert X_{t} - X_{t}' \vert^2 \bigr]
+ {\mathbb E} \bigl[ \sup_{T_{N-1} \leq t \leq T}\vert Y_{t} - Y_{t}' \vert^2 \bigr]
+ {\mathbb E} \int_{T_{N-1}}^{T} \vert Z_{t} - Z_{t}' \vert^2 dt
\\
&\hspace{15pt} \leq \Gamma \biggl( \EE[|X_{T_{N-1}}-X'_{T_{N-1}}|^2]+W_2(\mu_T,\mu'_T)^2 + \int_{T_{N-1}}^{T} W_{2}(\varphi(t,\cdot) \diamond \mu_{t},\varphi'(t,\cdot) \diamond \mu_{t}')^2 dt \biggr),
\end{split}
\end{equation*}
this estimate being true for all $\varphi$, $\varphi'$, $\mu$, $\mu'$ and all possible initial conditions for the processes $\u X$ and $\u X'$.  Note that we can assume $\Gamma>1$ without any loss of generality, and allow the value of $\Gamma$ to change from line to line as long as this new value depends only upon $T$ and $L$. Since the FBSDE value function $u$ (resp. $u'$) depends only upon $\varphi$ and $\mu$ (resp. $\varphi'$ and $\mu'$), we can choose to keep $\varphi$, $\varphi'$, $\mu$, $\mu'$, and set $X_{T_{N-1}}=X'_{T_{N-1}}=x$ for an arbitrary $x\in\RR^d$. Then the above inequality implies
$$
\sup_{x\in\RR^d}|u(T_{N-1},x)-u'(T_{N-1},x)|^2\le \Gamma \biggl( W_2(\mu_T,\mu'_T)^2 + \int_{T_{N-1}}^{T} W_{2}(\varphi(t,\cdot) \diamond \mu_{t},\varphi'(t,\cdot) \diamond \mu_{t}')^2 dt \biggr).
$$
We can now plug this estimate into inequality \eqref{fo:Ei} with $i=N-2$ to get:
\begin{equation*}
\begin{split}
&\EE\bigl[ \sup_{T_{N-2} \leq t \leq T_{N-1}}\vert X_{t} - X_{t}' \vert^2 \bigr]
+ {\mathbb E} \bigl[ \sup_{T_{N-2} \leq t \leq T_{N-1}}\vert Y_{t} - Y_{t}' \vert^2 \bigr]
+ {\mathbb E} \int_{T_{N-2}}^{T_{N-1}} \vert Z_{t} - Z_{t}' \vert^2 dt
\\
&\hspace{15pt} \leq \Gamma \biggl( \EE[|X_{T_{N-2}}-X'_{T_{N-2}}|^2]+W_2(\mu_T,\mu'_T)^2 + \int_{T_{N-2}}^{T} W_{2}(\varphi(t,\cdot) \diamond \mu_{t},\varphi'(t,\cdot) \diamond \mu_{t}')^2 dt \biggr).
\end{split}
\end{equation*}
As before, we can write what this estimate gives if we keep $\varphi$, $\varphi'$, $\mu$, $\mu'$, and set $X_{T_{N-2}}=X'_{T_{N-2}}=x$ for an arbitrary $x\in\RR^d$. 
$$
\sup_{x\in\RR^d}|u(T_{N-2},x)-u'(T_{N-2},x)|^2\le \Gamma \biggl( W_2(\mu_T,\mu'_T)^2 + \int_{T_{N-2}}^{T} W_{2}(\varphi(t,\cdot) \diamond \mu_{t},\varphi'(t,\cdot) \diamond \mu_{t}')^2 dt \biggr).
$$
Plugging this estimate into inequality \eqref{fo:Ei} with $i=N-3$ we get:
\begin{equation*}
\begin{split}
&\EE\bigl[ \sup_{T_{N-3} \leq t \leq T_{N-2}}\vert X_{t} - X_{t}' \vert^2 \bigr]
+ {\mathbb E} \bigl[ \sup_{T_{N-3} \leq t \leq T_{N-2}}\vert Y_{t} - Y_{t}' \vert^2 \bigr]
+ {\mathbb E} \int_{T_{N-3}}^{T_{N-2}} \vert Z_{t} - Z_{t}' \vert^2 dt
\\
&\hspace{15pt} \leq \Gamma \biggl( \EE[|X_{T_{N-3}}-X'_{T_{N-3}}|^2]+W_2(\mu_T,\mu'_T)^2 + \int_{T_{N-3}}^{T} W_{2}(\varphi(t,\cdot) \diamond \mu_{t},\varphi'(t,\cdot) \diamond \mu_{t}')^2 dt \biggr).
\end{split}
\end{equation*}
Iterating and summing up these estimates we get (as before the value of the constants can change from line to line)
\begin{equation*}
\begin{split}
&{\mathbb E} \bigl[ \sup_{0 \leq t \leq T}\vert X_{t} - X_{t}' \vert^2 \bigr]
+ {\mathbb E} \bigl[ \sup_{0 \leq t \leq T}\vert Y_{t} - Y_{t}' \vert^2 \bigr]
+ {\mathbb E} \int_0^T \vert Z_{t} - Z_{t}' \vert^2 dt
\\
&\hspace{15pt} \leq \Gamma \sum_{i=0}^{N-1}\biggl(\EE[|X_{T_i}-X'_{T_i}|^2]+  W_{2}(\mu_{T},\mu_{T}')^2 + \int_{T_i}^T W_{2}(\varphi(t,\cdot) \diamond \mu_{t},\varphi'(t,\cdot) \diamond \mu_{t}')^2 dt \biggr),
\end{split}
\end{equation*}
from which we get the desired estimate \eqref{fo:W2est} after noticing that for each $i\ge 1$, we have:
\begin{equation*}
\begin{split}
&\EE[|X_{T_i}-X'_{T_i}|^2]\le\EE[\sup_{T_{i-1}\le t\le T_i}|X_t-X'_t|^2]\\
&\leq \Gamma\bigg(\EE[|X_{T_{i-1}}-X'_{T_{i-1}}|^2]+W_{2}(\mu_{T},\mu_{T}')^2 + \int_{T_{i-1}}^T W_{2}(\varphi(t,\cdot) \diamond \mu_{t},\varphi'(t,\cdot) \diamond \mu_{t}')^2 dt \bigg).
\end{split}
\end{equation*}
from which we easily conclude.
\end{proof}

\vskip 4pt
We shall estimate the integral in the right hand side of \eqref{fo:W2est} from the remark:
\begin{equation}
\label{fo:preW2est}
\begin{split}
W_{2}(\varphi(t,\cdot) \diamond \mu_{t},\varphi'(t,\cdot) \diamond \mu_{t}')
&\leq C \bigg[ W_{2}(\mu_{t},\mu_{t}') + W_{2}(\varphi(t,\cdot)(\mu_{t}),\varphi(t,\cdot)(\mu_{t}')) 
\\
&\hspace{55pt}
+ \biggl( \int_{\RR^d} \vert (\varphi - \varphi')(t,x) \vert^2 d \mu_{t}'(x) \biggr)^{1/2}\bigg].
\end{split}
\end{equation}

\subsection{Fixed Point Argument in the Bounded Case}
\label{sub:bdedcase}
In this subsection we still assume that the coefficients $B$ and $F$ are bounded by the constant $L$.
For any  bounded continuous function $\varphi: [0,T] \times \RR^d \rightarrow \RR^p$ and for any 
probability measure $\mu \in {\mathcal P}_{2}({\mathcal C}([0,T];\RR^d))$, if we denote by $\mu_t$ the time $t$ marginal of $\mu$, the map $[0,T]\ni t\hookrightarrow \varphi(t,\cdot)\diamond\mu_t\in {\mathcal P}_{2}(\RR^d\times\RR^p)$ is continuous. So by Lemma \ref{lem:1}, there exists a unique triplet $(X_{t},Y_{t},Z_{t})_{0 \leq t \leq T}$ satisfying \reff{fo:our_fbsde}. 
Moreover, there exists a bounded and continuous mapping $u$ from  $[0,T] \times \RR^d$ into $\RR^p$ such that 
$Y_{t} = u(t,X_{t})$. This maps the input $(\varphi,\mu)$ into the output $(u,\PP_{X})$ and our goal is to find a fixed point for this map. 
We shall take advantage of the a-priori $L^{\infty}$ bound on $u$ to restrict the choice of the functions $\varphi$ to the set:
\begin{equation}
\label{fo:E1}
E_{1} = \bigl\{ \varphi \in {\mathcal C}([0,T] \times \RR^d;\RR^p) ; \quad \forall (t,x) \in [0,T] \times \RR^d, \quad \vert \varphi(t,x) \vert \leq \gamma \bigr\}.
\end{equation}
Similarly, since the drift $B$ and the volatility $\Sigma$ are uniformly bounded, the fourth moment of the supremum $\sup_{0\le t\le T}|X_t|$ is bounded by a constant depending only upon the bounds of $B$ and $\Sigma$. Consequently, we shall choose the input measure
$\mu$ in the set:
\begin{equation}
\label{fo:E2}
E_{2} = \biggl\{ \mu \in {\mathcal P}_4\bigl({\mathcal C}([0,T];\RR^d)\bigr) ;
\int_{{\mathcal C}([0,T];\RR^d)} \;\sup_{0\leq t \leq T} \vert w_{t} \vert^4 \;d\mu(w) \leq \gamma'
 \biggr\}
\end{equation}
for $\gamma'$ appropriately chosen.
We then denote by $E$ the Cartesian product $E=E_{1} \times E_{2}$. We view $E$ as a subset of the product vector space $V = V_{1} \times V_{2}$, where $V_{1} = {\mathcal C}_{b}([0,T] \times \RR^d;\RR^p)$ stands for the space of bounded continuous functions from $[0,T] \times \RR^d$ into $\RR^p$, and  $V_{2} = {\mathcal M}_{b}({\mathcal C}([0,T];\RR^d))$ for the space of finite signed measures on the space ${\mathcal C}([0,T];\RR^d)$ endowed with the Borel $\sigma$-field generated by the topology of uniform convergence. On $V_{1}$, we use the exponentially weighted supremum-norm 
\begin{equation*}
\| h \|_{1} = \sup_{(t,x) \in [0,T] \times \RR^d} e^{- \vert x \vert} | h(t,x)|,
\end{equation*}
and on $V_{2}$,  the Kantorovitch-Rubinstein norm 
\begin{equation*}
\| \mu \|_{2} = \sup \biggl\{ \int_{{\mathcal C}([0,T];\RR^d)} F d\mu ; \quad F \in {\rm Lip}_{1}({\mathcal C}([0,T];\RR^d)), \ \sup_{h \in {\mathcal C}([0,T];\RR^d)} \vert F(h)\vert \leq 1\biggr\}.
\end{equation*}
Here, ${\rm Lip}_{1}({\mathcal C}([0,T];\RR^d))$ stands for the Lip-1 functions on ${\mathcal C}([0,T];\RR^d)$ equipped
with the metric of uniform convergence on compact subsets.

\vskip 2pt
We emphasize that $E_{1}$ is a convex closed bounded subset of $V_{1}$. Moreover, we notice that the convergence for the norm
$\| \cdot \|_{1}$ of a sequence of functions in $E_{1}$ is equivalent to the uniform convergence on compact subsets of $[0,T] \times \RR^d$. Similarly, $E_{2}$ is a convex closed bounded subset of $V_{2}$ as the convergence of non-negative measures on 
a metric space for the Kantorovitch-Rubinstein norm implies weak convergence of measures. 
We now claim:

\begin{lemma}
\label{le:phi} 
Assume that, in addition to (A1--3), the coefficients $B$ and $F$ are also bounded by $L$. Then, the mapping $\Phi: E  \ni (\varphi,\mu) \hookrightarrow(u,\PP_{X}) \in E$ defined above is continuous and has a relatively compact range.
\end{lemma}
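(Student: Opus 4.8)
The plan is to establish the two assertions of the lemma separately: relative compactness of the range first, then continuity of $\Phi$.

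For the \textbf{relative compactness} of the range: write $\Phi(\varphi,\mu)=(u,\PP_X)$. I would show that $u$ lies in a relatively compact subset of $(E_1,\|\cdot\|_1)$ and $\PP_X$ in a relatively compact subset of $(E_2,\|\cdot\|_2)$, then invoke Tychonoff. For the first component, Lemma \ref{lem:1} already gives that $u$ is bounded by $\gamma$ and $(1/2,1)$-H\"older/Lipschitz in $(t,x)$ with a constant $\Gamma$ independent of $(\varphi,\mu)$; so the family $\{u\}$ is uniformly bounded and uniformly equicontinuous, hence by Arzel\`a--Ascoli relatively compact for the topology of uniform convergence on compact sets, which as noted in the text coincides on $E_1$ with the $\|\cdot\|_1$-topology. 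For the second component, I would use the uniform fourth-moment bound $\EE[\sup_{0\le t\le T}|X_t|^4]\le\gamma'$ (from boundedness of $B$ and $\Sigma$), together with a standard Kolmogorov-type estimate $\EE[|X_t-X_s|^4]\le C|t-s|^2$ — again valid uniformly in $(\varphi,\mu)$ because the bounds on $B,\Sigma$ are uniform — to conclude tightness of the family $\{\PP_X\}$ in $\cP(\cC([0,T];\RR^d))$ via Aldous/Kolmogorov tightness criteria; uniform integrability of $\sup_t|w_t|$ under these laws follows from the fourth-moment bound, which upgrades weak relative compactness to relative compactness in the Wasserstein-$1$/Kantorovich--Rubinstein sense, i.e. in $(E_2,\|\cdot\|_2)$.

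For the \textbf{continuity} of $\Phi$: suppose $(\varphi_n,\mu_n)\to(\varphi,\mu)$ in $E$, i.e. $\varphi_n\to\varphi$ uniformly on compacts and $\mu_n\to\mu$ weakly (with bounded fourth moments). Let $(X^n,Y^n,Z^n)$ and $(X,Y,Z)$ be the corresponding solutions of \eqref{fo:our_fbsde}, with value functions $u_n,u$. Applying Lemma \ref{le:W2est} with inputs $(\varphi_n,\mu_n)$ and $(\varphi,\mu)$ controls $\EE[\sup_t|X^n_t-X_t|^2]+\EE[\sup_t|Y^n_t-Y_t|^2]+\EE\int_0^T|Z^n_t-Z_t|^2dt$ by $\Gamma(W_2(\mu_{n,T},\mu_T)^2+\int_0^T W_2(\varphi_n(t,\cdot)\diamond\mu_{n,t},\varphi(t,\cdot)\diamond\mu_t)^2dt)$, and then \eqref{fo:preW2est} bounds the integrand by $W_2(\mu_{n,t},\mu_t)^2$, $W_2(\varphi(t,\cdot)(\mu_{n,t}),\varphi(t,\cdot)(\mu_t))^2$, and $\int_{\RR^d}|(\varphi_n-\varphi)(t,x)|^2d\mu_{n,t}(x)$. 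Each of these tends to $0$: the first because $W_2(\mu_{n,t},\mu_t)\le W_2(\mu_n,\mu)\to0$ (weak convergence plus fourth-moment bound gives $W_2$-convergence, in fact uniformly in $t$); the second because $\varphi(t,\cdot)$ is continuous and bounded, so the push-forward measures converge in $W_2$; the third because $\varphi_n-\varphi\to0$ uniformly on compacts and is uniformly bounded by $2\gamma$, while $\{\mu_{n,t}\}$ is tight uniformly in $n,t$. Dominated convergence (justified by the uniform bounds) then gives $\EE[\sup_t|X^n_t-X_t|^2]\to0$, hence $\PP_{X^n}\to\PP_X$ in $\|\cdot\|_2$. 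Finally, to get $u_n\to u$ in $\|\cdot\|_1$, i.e. uniformly on compacts: using $u_n(t,x)-u(t,x)$ evaluated as the $Y$-value at time $t$ of the FBSDE started from $(t,x)$, the same Lemma \ref{le:W2est}/\eqref{fo:preW2est} argument — run with deterministic initial condition $X_t=X'_t=x$ as in the proof of Lemma \ref{le:W2est} — yields $\sup_{x}|u_n(t,x)-u(t,x)|^2\le\Gamma(W_2(\mu_{n,T},\mu_T)^2+\int_0^T(\cdots)dt)\to0$, so convergence is in fact uniform in $(t,x)$, a fortiori on compacts.

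The \textbf{main obstacle} I anticipate is the continuity of the measure component, specifically handling the term $\int_{\RR^d}|(\varphi_n-\varphi)(t,x)|^2d\mu_{n,t}(x)$: one cannot integrate a merely locally-uniform limit against a moving family of measures without a uniform tightness input, so the argument genuinely needs the uniform (in $n$ and in $t$) fourth-moment control provided by $E_2$, combined with the uniform bound $|\varphi_n|,|\varphi|\le\gamma$ on $E_1$, to split the integral over a large ball (where $\varphi_n\to\varphi$ uniformly) and its complement (whose $\mu_{n,t}$-mass is small uniformly). A secondary technical point is checking that weak convergence on path space together with the fourth-moment bound really does upgrade to $W_2$-convergence of the time-$t$ marginals uniformly in $t$, which follows from the characterization of Wasserstein convergence as weak convergence plus convergence of second moments, applied carefully along the path-space coordinate projections.
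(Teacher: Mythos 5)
Your proposal is correct, and the relative-compactness half and the bulk of the continuity half (the reduction via Lemma \ref{le:W2est} and \eqref{fo:preW2est} to the three convergences of $W_2(\mu^n_t,\mu_t)$, $W_2(\varphi(t,\cdot)(\mu^n_t),\varphi(t,\cdot)(\mu_t))$ and $\int|\varphi^n-\varphi|^2(t,\cdot)\,d\mu^n_t$, handled exactly by the tightness-plus-fourth-moment splitting you identify as the main obstacle) coincide with the paper's argument. Where you genuinely diverge is the last step, the convergence $u^n\to u$ for $\|\cdot\|_1$. The paper only extracts $\sup_t\EE[|u^n(t,X_t)-u(t,X_t)|^2]\to 0$ from the convergence of the processes, which controls $u^n-u$ only on the support of $\PP_{X_t}$; it then invokes Arzel\`a--Ascoli to get a subsequential limit $\hat u$ and uses Girsanov (i.e.\ the ellipticity (A3), which makes $\PP_{X_t}$ equivalent to Lebesgue measure for $t>0$) to identify $\hat u=u$ everywhere. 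You instead re-run the stability estimate of Lemma \ref{le:W2est} with deterministic initial condition $X_t=X'_t=x$ at an \emph{arbitrary} time $t$, obtaining a genuine sup-norm bound $\sup_{t,x}|u^n(t,x)-u(t,x)|^2\le\Gamma(W_2(\mu^n_T,\mu_T)^2+\int_0^T W_2(\varphi^n(s,\cdot)\diamond\mu^n_s,\varphi(s,\cdot)\diamond\mu_s)^2\,ds)$; this is legitimate (the backward induction in the proof of Lemma \ref{le:W2est} gives the bound at the subdivision points $T_i$, and for $t\in(T_i,T_{i+1})$ one applies the short-time estimate on the even shorter interval $[t,T_{i+1}]$ using the flow property $Y_{T_{i+1}}=u(T_{i+1},X_{T_{i+1}})$), and it is arguably cleaner: it yields uniform convergence on all of $[0,T]\times\RR^d$, not just on compacts, and avoids both the compactness extraction and the Girsanov identification at this stage. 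The paper's route buys a little economy (it reuses the already-established $L^2$ convergence of the $Y$-processes rather than restating the pointwise stability estimate), at the cost of leaning again on the non-degeneracy of $\Sigma$. Either way the lemma follows; just make the one-line remark that the pointwise-in-$(t,x)$ stability bound extends from the subdivision points to all $t$ explicit if you write this up.
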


\begin{proof}We first check the continuity of $\Phi$. Given a sequence $(\varphi^n,\mu^n)$ in $E$ converging towards $(\varphi,\mu) \in E$ with respect to the product norm on $V_{1}\times V_{2}$, and given the corresponding solutions $(X^n,Y^n,Z^n)$ and $(X,Y,Z)$ obtained by solving \eqref{fo:our_fbsde} with $(\varphi^n,\mu^n)$ and $(\varphi,\mu)$ respectively, we have (compare with 
\eqref{fo:preW2est}): $(i)$ for any $t \in [0,T]$, 
$W_{2}(\mu_{t},\mu^n_{t}) \rightarrow 0$ as $n \rightarrow + \infty$ since $(\mu^n_{t})_{n \geq 1}$ converges weakly towards $\mu_{t}$  and the moments of order $4$ of the measures $(\mu^n_{t})_{n \geq 1}$ are uniformly bounded by $\gamma'$; by boundedness of the moments of order 4 again, the integral with respect to $t$ of $W_2(\mu_{t},\mu^n_{t})$ converges towards $0$; $(ii)$ by continuity and boundedness of $\varphi$, and by a similar argument, the integral with respect to $t$ of 
$W_{2}(\varphi(t,\cdot)(\mu_{t}),\varphi(t,\cdot)(\mu_{t}^n))^2$ converges toward $0$ as $n \rightarrow + \infty$; 
$(iii)$ since the sup-norms of all the $\varphi^n$ are not greater than $\gamma$, the tightness of the measures $(\mu^n)_{n \geq 1}$ together with the uniform convergence of $(\varphi^n)_{n \geq 1}$ towards $\varphi$ on compact sets can be used to prove that 
\begin{equation*}
\lim_{n \rightarrow + \infty} \sup_{0 \leq t \leq T}
\int_{\RR^d} \vert (\varphi - \varphi^n)(t,y) \vert^2 d \mu_{t}^n(y) = 0.
\end{equation*}
Similarly, $W_{2}(\mu_{T},\mu_{T}^n) \rightarrow 0$ as $n$ tends to $+\infty$.  
From \eqref{fo:Ei} and \eqref{fo:W2est}, we obtain
\begin{equation*}
\lim_{n \rightarrow + \infty} \bigg[ {\mathbb E} \sup_{0 \leq t \leq T}\vert X_{t} - X_{t}^n \vert^2 
+ {\mathbb E}  \sup_{0 \leq t \leq T}\vert Y_{t} - Y_{t}^n \vert^2 
+ {\mathbb E} \int_{0}^T \vert Z_{t}- Z_{t}^n \vert^2 dt \biggr] = 0,
\end{equation*}
from which we deduce that $\PP_{X^n}$ converges towards $\PP_{X}$ as $n$ tend to $+ \infty$ for the topology of weak convergence of measures and thus for $\| \cdot \|_2$. Denoting by $u^n$ the FBSDE value function which is a function
from $[0,T] \times \RR^d$ into $\RR^p$ such that $Y_{t}^n = u^n(t,X_{t}^n)$, and by $u$ the FBSDE value function for which
$Y_{t} = u(t,X_{t})$, we deduce that 
\begin{equation*}
\lim_{n\rightarrow + \infty} \sup_{0 \leq t \leq T} {\mathbb E} \bigl[ \vert u^n(t,X_{t}^n) - u(t,X_{t}) \vert^2 \bigr] = 0.
\end{equation*}
By Lemma \ref{lem:1}, we know that all the mappings $(u^n)_{n \geq 1}$ are Lipschitz continuous with respect to $x$, uniformly with respect to $n$. Therefore
\begin{equation*}
\lim_{n\rightarrow + \infty} \sup_{0 \leq t \leq T} {\mathbb E} \bigl[ \vert u^n(t,X_{t}) - u(t,X_{t}) \vert^2 \bigr] = 0.
\end{equation*}
Moreover, by Arz\`ela-Ascoli's theorem, the sequence $(u^n)_{n \geq 1}$ is relatively compact for the uniform convergence on compact sets, so denoting by $\hat{u}$ the limit of a subsequence converging for the norm $\| \cdot \|_{1}$,
we deduce that, for any $t \in [0,T]$, $\hat{u}(t,\cdot)= u(t,\cdot)$ $\PP_{X_{t}}$-{\rm a.s.}. By Girsanov Theorem, 
$\PP_{X_{t}}$ is equivalent to Lebesgue measure for any $t \in (0,T]$, so that $\hat{u}(t,\cdot)=u(t,\cdot)$ for any $t \in (0,T]$. By continuity of $u$ and $\hat{u}$ on the whole $[0,T] \times \RR^d$, equality holds at $t=0$ as well. This shows that $(u^n)_{n \geq 1}$ converges towards $u$ for $\| \cdot \|_{1}$ and completes the proof of the continuity of $\Phi$. 

\vskip 2pt
We now prove that $\Phi(E)$ is relatively compact for the product norm of $V_{1}\times V_{2}$. Given $(u,\nu) = \Phi(\varphi,\mu)$ for some $(\varphi,\mu) \in E$, we know from Lemma \ref{lem:1}
that $u$ is bounded by $\gamma$ and $(1/2,1)$-H\"older continuous with respect to $(t,x)$, the H\"older constant being bounded by $\Gamma$. In particular, $u$ remains in a compact subset of ${\mathcal C}([0,T] \times \RR^d;\RR^m)$ for the topology of uniform convergence on compact sets as $(\varphi,\mu)$ varies over $E$. Similarly, $\nu$ remains in a compact set when $(\varphi,\mu)$ varies over $E$. Indeed, if $\PP_X=\nu$ is associated to $(\varphi,\mu)$, the modulus of continuity of $X$ is controlled by the fact that $B$ and $\Sigma$ are bounded by constants independent of $\varphi$ and $\mu$.
 \end{proof}

 We have completed all the steps needed to get a quick proof of the main result of this subsection.
  
 \begin{proposition}
 \label{pr:bdedexistence}
 Assume that, in additition to (A1--3), the coefficients $B$, $\Sigma$, $F$ and $G$ are bounded by $L$. Then equation \eqref{fo:fbsde} has a solution.
 \end{proposition}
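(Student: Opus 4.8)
The plan is to produce the solution as a fixed point of the map $\Phi$ constructed above, by means of Schauder's theorem. First I would verify that $\Phi$ genuinely maps $E=E_{1}\times E_{2}$ into itself. By Lemma \ref{lem:1}, for any input $(\varphi,\mu)\in E$ the associated FBSDE value function $u$ satisfies $\vert u \vert\le\gamma$ on $[0,T]\times\RR^d$ and is continuous (in fact $(1/2,1)$-Hölder), so $u\in E_{1}$; and since $B$ and $\Sigma$ are bounded by $L$, the forward component of \eqref{fo:our_fbsde} is a diffusion with bounded coefficients, so a direct Burkholder--Davis--Gundy estimate bounds $\EE[\sup_{0\le t\le T}\vert X_{t}\vert^{4}]$ by a constant depending only on $T$, $L$ and $\vert x_{0}\vert$ --- this is precisely the constant $\gamma'$ entering the definition of $E_{2}$, hence $\PP_{X}\in E_{2}$. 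Thus $\Phi:E\to E$ is well defined, and $E$ is a nonempty, convex, closed, bounded subset of the locally convex space $V=V_{1}\times V_{2}$.

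Second, I would invoke Lemma \ref{le:phi}: the map $\Phi$ is continuous for the product norm of $V_{1}\times V_{2}$ and $\Phi(E)$ is relatively compact. Schauder's fixed point theorem therefore applies and yields a point $(\varphi,\mu)\in E$ with $\Phi(\varphi,\mu)=(\varphi,\mu)$. The one point deserving care is that $V$ is not a Banach space, so one applies the locally convex version of Schauder's theorem to the compact convex set $\overline{\mathrm{conv}}\,\Phi(E)$, which is contained in $E$ because $E$ is closed and convex and which stays compact because $E_{1}$ and $E_{2}$ sit inside quasi-complete spaces (so that the closed convex hull of the relatively compact set $\Phi(E)$ remains compact).

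Third, I would unwind the fixed point identity. Let $(X,Y,Z)$ be the unique solution of \eqref{fo:our_fbsde} driven by the input $(\varphi,\mu)$, with value function $u$, so that $Y_{t}=u(t,X_{t})$ for every $t$. The equality $\Phi(\varphi,\mu)=(\varphi,\mu)$ means exactly that $u=\varphi$ and $\PP_{X}=\mu$; in particular $\mu_{t}=\PP_{X_{t}}$ and $Y_{t}=\varphi(t,X_{t})$, so the measure $\varphi(t,\cdot)\diamond\mu_{t}$, being the image of $\PP_{X_{t}}$ under $x\mapsto(x,\varphi(t,x))$, is nothing but the law of $(X_{t},Y_{t})$, i.e. $\varphi(t,\cdot)\diamond\mu_{t}=\PP_{(X_{t},Y_{t})}$. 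Substituting this into \eqref{fo:our_fbsde}, and using $\mu_{T}=\PP_{X_{T}}$ so that the terminal condition reads $Y_{T}=G(X_{T},\PP_{X_{T}})$, we conclude that $(X,Y,Z)$ solves \eqref{fo:fbsde} with $X_{0}=x_{0}$, which is the claim of the Proposition.

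I expect the only genuine obstacle to be the rigorous application of the fixed point theorem in the non-normable product space $V$, flagged above; but it is essentially dissolved by Lemma \ref{le:phi}, which provides precisely the compactness that Schauder requires. Everything else is the same bookkeeping --- moment bounds for a diffusion with bounded coefficients, and identification of push-forward laws --- already carried out in the preceding lemmas.
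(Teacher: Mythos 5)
Your proposal is correct and follows the same route as the paper: Schauder's fixed point theorem applied to $\Phi$ on $E$ (whose continuity and relative compactness of range are exactly Lemma \ref{le:phi}), followed by unwinding the fixed point identity to recognize $\varphi(t,\cdot)\diamond\mu_t=\PP_{(X_t,Y_t)}$ and $\mu_T=\PP_{X_T}$. Your additional care about the self-mapping property of $\Phi$ and the non-Banach nature of $V_1\times V_2$ is a welcome precision that the paper leaves implicit, but it does not change the argument.
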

 
 \begin{proof} By Schauder's fixed point theorem, $\Phi$ has a fixed point $(\varphi,\mu)$. As explained in our description of the strategy of proof, solving  \eqref{fo:our_fbsde} with this $(\varphi,\mu)$ as input, and denoting by $(X_{t},Y_{t},Z_{t})_{0 \leq  t \leq T}$ the resulting solution, by definition of a fixed point, we have $Y_{t} = \varphi(t,X_{t})$ for any $t \in [0,T]$, a.s., and $\PP_{X} = \mu$ so that $\PP_{X_{t}} = \mu_{t}$. In particular, $\varphi(t,\cdot) \diamond \mu_{t}$ coincides with 
$\PP_{(X_{t},Y_{t})}$. We conclude that $(X_{t},Y_{t},Z_{t})_{0 \leq t \leq T}$ satisfies \eqref{fo:fbsde}. 
\end{proof}

 \subsection{Relaxing the Boundedness Condition.}
We now complete the proof of Theorem \ref{th:mkv_fbsde} when the coefficients only satisfy (A1--3). The proof consists in approximating $B$ and $F$ by sequences of bounded coefficients $(B^n)_{n \geq 1}$ and $(F^n)_{n \geq 1}$. 
\begin{proof} (Theorem 1)
For any $n \geq 1$, $t \in [0,T]$, $x \in \RR^d$, $y \in \RR^p$, $z \in \RR^{p \times m}$ and $\mu \in {\mathcal P}_2(\RR^{d} \times 
\RR^m)$, we set:
\begin{equation*}
B^n(t,x,y,z,\mu) = \Pi_{n}^{(d)} \bigl( B(t,x,y,z,\mu) \bigr), 
\quad
F^n(t,x,y,z,\mu) = \Pi_{n}^{(p)} \bigl( F(t,x,y,z,\mu) \bigr), 
\end{equation*}
where, for any integer $k$, $\Pi_{n}^{(k)} $ is the orthogonal projection from $\RR^k$ onto the $k$-dimensional ball of center $0$ and of radius $n$.

For each $n$, assumptions (A1-3) are satisfied with $B^n$ and $F^n$ instead of  $B$ and $F$, and we denote by  $(X^n,Y^n,Z^n)$ the solution of 
\eqref{fo:fbsde} given by Proposition \ref{pr:bdedexistence} when the system \eqref{fo:fbsde} is driven by $B^n$, $F^n$, $\Sigma$ and $G$. As explained in the previous subsection, the process $Y^n$ satisfies 
$Y^n_{t} = u^n(t,X_{t}^n)$, for any $t \in [0,T]$, for some deterministic function $u^n$. The first step of the proof is to establish the relative compactness of the families $(u^n)_{n \geq 1}$ and $(\PP_{X^n})_{n \geq 1}$.

We notice first that the processes $(Y^n)_{n \geq 1}$ are uniformly bounded by a constant that depends upon $L$ only. Indeed, applying It\^o's formula and using the specific growth condition (A2), we get:
\begin{equation*}
\forall t \in [0,T], \quad {\mathbb E} \bigl[ \vert Y_{t}^n \vert^2 \bigr] \leq C + C \int_{t}^T {\mathbb E} \bigl[ \vert Y_{s}^n \vert^2 \bigr] ds,
\end{equation*}
for some constant $C$ depending on $T$ and $L$ only. As usual, the value of $C$ may vary from line to line. By Gronwall's lemma, we deduce that 
the quantity $\sup_{0 \leq t \leq T}
{\mathbb E} [ \vert Y_{t}^n \vert^2]$ can be bounded in terms of $T$ and $L$ only. Injecting this estimate into (A2) shows that the 
driver $(-F^n(t,X_{t}^n,Y_{t}^n,Z_{t}^n,\PP_{(X_{t}^n,Y_{t}^n)}))_{0 \leq t \leq T}$  is bounded by $(C(1 + \vert Y_{t}^n \vert))_{
0 \leq t \leq T}$, for a possibly new value of $C$. Using now the uniform boundedness of the function $G$ giving the terminal condition $Y_T$, we conclude that the processes $(Y^n_{t})_{0 \leq t \leq T}$ are bounded, uniformly in $n \geq 1$. Indeed, for any $n \geq 1$, the process
$(\vert Y^n_{t} \vert^2)_{0 \leq t \leq T}$ can be seen as a solution of a BSDE with random coefficients; thus, it can be compared with the deterministic solution of a deterministic BSDE with a constant terminal condition.  In particular, there exists a positive constant, say $\gamma$, such that, for any $n \geq 1$ for any $t \in [0,T]$, $\vert u^n(t,X_{t}^n) \vert \leq \gamma$. Since for each $t\in[0,T]$, $X^n_t$ has a density with respect to Lebesgue's measure on $\RR^d$ (recall that $B^n$ is bounded and $\Sigma\Sigma^\dagger$ is bounded from above and from below away from $0$), we deduce that $u^n$ is bounded by $\gamma$ on the whole $[0,T] \times \RR^d$, and for any $n \geq 1$.  As a by-product, we get
\begin{equation}
\label{fo:zmoment}
{\mathbb E} \biggl[ \biggl( \int_{0}^T \vert Z_{s}^n \vert^2 ds \biggr)^2 \biggr] \leq C.
\end{equation}

When $p=1$, Theorem 2.1 in 
 \cite{DelarueGuatteri} says that the value function $u^n$ is a continuous solution with Sobolev derivatives of order $1$ in time and of order $2$ in space of a quasilinear PDE. As mentioned in the conclusion of \cite{DelarueGuatteri}, the result remains true when $p \geq 2$, the value function $u^n$ then solving a system of quasilinear PDEs. Using the fact that the proofs of  Theorems 1.1 and 1.3 of \cite{Delarue03} are local (even though they are stated under global assumptions), 
we deduce that the functions $(u^n)_{n \geq 1}$ are continuous in $(t,x)$, uniformly in $n \geq 1$,  on any compact subsets of $[0,T] \times \RR^d$. In a similar way, by Theorem 2.7 of \cite{Delarue03}, the proof of which is also local, the functions $((u^n(t,\cdot))_{n\geq 1})_{0 \leq t \leq T}$ are locally Lipschitz-continuous, uniformly in $t \in [0,T]$ and in $n \geq 1$.

Applying It\^o's formula to $\vert X^n \vert^2$, using the growth conditions (A2), the uniform boundedness of $Y^n_t$, the boundedness of $\Sigma$ and the bound \eqref{fo:zmoment}, we can use Gronwall's lemma and get the existence of a finite constant $C$ such that, for any $n \geq 1$, 
\begin{equation}
\label{fo:xmoment}
{\mathbb E}[\sup_{0 \leq t \leq T} \vert X_{t}^n \vert^4] \leq C. 
\end{equation}
Using the bound (A2) for $B^n$ and the same constant $L$, together with the uniform boundedness of the paths $Y^n_t$, \eqref{fo:zmoment} and \eqref{fo:xmoment}, it is easy to check that
$$
\EE[|X^n_t-X^n_s|^4]\le C|t-s|^2
$$
for all $s,t\in[0,T]$ for a constant $C$ independent of $n$, $s$ and $t$. Consequently, Kolmogorov's criterion shows that the family $(\PP_{X^n})_{n \geq 1}$ of probability measures on ${\mathcal C}([0,T];\RR^d)$ is tight. Replacing $Y^n_t$ by $u^n(t,X^n_t)$ in $\Sigma(t,X^n_t,Y^n_t,\PP_{(X^n_{t},Y^n_{t})})$, we see that each component of $u^n$ satisfies a PDE, and since $\Sigma(t,x,u^n(t,x),\PP_{(X^n_{t},Y^n_{t})})$ is locally H\"older continuous uniformly in $n\ge 1$, we can use Theorem 4, Chapter 7 Section 2 of \cite{Friedman} and conclude that the gradients of these components are locally H\"older continuous in $(t,x)$ on compact subsets of $[0,T) \times \RR^d$, uniformly in $n\ge 1$. Consequently, one can thus extract a subsequence $(n_{k})_{k\ge 1}$ such that 
$u^{n_{k}}$ and $\partial_xu^{n_{k}}$ converge uniformly on compact subsets of
$[0,T] \times \RR^d$ and of  
 $[0,T) \times \RR^d$ respectively, 
and $\PP_{X^{n_{k}}}$ converges towards a probability measure $\mu$ on ${\mathcal C}([0,T];\RR^d)$.
If we denote by $u$ the limit of  $u^{n_{k}}$, the function $u$ is continuously differentiable with respect to $x$ on $[0,T) \times \RR^d$, and
$\partial_{x} u^{n_{k}}$ converges towards $\partial_{x} u$ uniformly on compact subsets of $[0,T) \times \RR^d$. 
As before, we can use the uniform boundedness of the functions $u^n$, the tightness of the sequence $(\PP_{X^{n_{k}}})_{k\ge 1}$ and the uniform convergence of $u^{n_{k}}$ towards $u$ on any compact subset of $[0,T) \times \RR^d$ to conclude that for any $t \in [0,T]$,
\begin{equation}
\label{fo:was}
\lim_{p \rightarrow + \infty} W_{2} \bigl( u^{n_{k}}(t,\cdot) \diamond \PP_{X^{n_{k}}_{t}}, u(t,\cdot) \diamond \mu_{t} \bigr)= 0, 
\end{equation}
where $\mu_{t}$ stands for the marginal law of $\mu$ of time index $t$. 
Now for each $R > 0$, we denote by $\tau_{R}^n$ the first time the process $\u X^n$ leaves the ball of center $0$ and radius $R$, in other words $\tau_{R}^n = \inf\{t \geq 0;\; \vert X_{t}^n \vert \geq R\}$. 
Using the uniform convergence of $(u^{n_{k}},\partial_{x} u^{n_{k}})$ to $(u,\partial_{x}u)$ on compact subsets of $[0,T) \times \RR^d$ together with \eqref{fo:zmoment}, we have for each fixed $R>0$:
\begin{equation*}
\lim_{k \rightarrow + \infty} \sup_{\ell,\ell' \geq 0} {\mathbb E} \bigl[ \sup_{0 \leq t \leq \tau^{n_{k+\ell}}_{R} \wedge \tau^{n_{k+\ell'}}_{R}}
\vert X_{t}^{n_{k+\ell}} - X_{t}^{n_{k+\ell'}} \vert^2 \bigr] = 0.
\end{equation*}
Since \eqref{fo:xmoment} implies that:
\begin{equation*}
\lim_{R \rightarrow + \infty} \sup_{\ell,\ell' \geq 1} {\mathbb E}\bigl[ \sup_{0 \leq t \leq T} \vert X_{t}^{n_{k+\ell'}} - X_{t}^{n_{k+\ell}} \vert^2 
{\mathbf 1}_{\{T > \tau^{n_{k+\ell}}_{R} \wedge \tau^{n_{k+\ell'}}_{R}\}} \bigr] = 0,
\end{equation*}
we must have: 
\begin{equation*}
\lim_{k \rightarrow + \infty} \sup_{\ell,\ell' \geq 0} {\mathbb E} \bigl[ \sup_{0 \leq t \leq T}
\vert X_{t}^{n_{k+\ell}} - X_{t}^{n_{k+\ell'}} \vert^2 \bigr] = 0,
\end{equation*}
which shows that the sequence $(\u X^{n_{k}})_{k \geq 1}$ is a Cauchy sequence. We denote by $\u X$ the limit. 
Since for each $n\ge 1$ we have:
\begin{equation*}
dX^n_{t} = B^n \bigl(t,X^n_{t},u^n(t,X^n_{t}),v^n(t,X^n_{t}),u^n(t,\cdot) \diamond \PP_{X^n_{t}} \bigr) dt + \Sigma(t,X^n_{t},u^n(t,X^n_{t}),u^n(t,\cdot) \diamond \PP_{X^n_{t}}) dW_{t},
\end{equation*}
for $0 \leq t \leq T$, 
with $v^n(t,x)=\partial_{x}u^n(t,x) \Sigma(t,x,u^n(t,x),u^n(t,\cdot)\diamond \PP_{X_{t}^n})$,
we can take the limit along the subsequence $(n_{k})_{k\ge 1}$ and since $\mu$ is the law of $\u X$, the local uniform convergence of $u^{n_k}$ and $\partial_xu^{n_k}$ towards $u$ and $\partial_xu$, together with \eqref{fo:was} imply that
\begin{equation*}
dX_{t} = B \bigl(t,X_{t},u(t,X_{t}),v(t,X_{t}),u(t,\cdot) \diamond \PP_{X_{t}} \bigr) dt + \Sigma(t,X_{t},u(t,X_{t}),u(t,\cdot) \diamond \PP_{X_{t}}) dW_{t}
\end{equation*}
for $0 \leq t \leq T$, with $v(t,x)=\partial_{x}u(t,x) \Sigma(t,x,u(t,x),u(t,\cdot)\diamond \PP_{X_{t}})$,
which is exactly the forward component of the McKean-Vlasov FBSDE \eqref{fo:fbsde} provided we set $Y_{t}=u(t,X_{t})$ and $Z_{t} = v(t,X_{t})$ for $t \in [0,T)$.
It is plain to deduce that the sequences $(\u Y^{n_{k}})_{k \geq 1}$ and $(\u Z^{n_{k}})_{k \geq 1}$ are Cauchy sequences for the norms ${\mathbb E}[\sup_{0 \leq s\leq T} \vert \cdot_{s} \vert^2 ]^{1/2}$ and $\EE[\int_{0}^T \vert \cdot_{s} \vert^2 ds ]^{1/2}$ respectively. Denoting the respective limits by $\u Y$ and $\u Z$, it holds, $\PP$-a.s., for any $t \in [0,T]$, $Y_{t}=u(t,X_{t})$, and, $\PP \otimes dt$ a.e., $Z_{t} = v(t,X_{t})$. Passing to the limit in \eqref{fo:fbsde} with $(B,F)$ therein replaced by $(B^{n_{k}},F^{n_{k}})$, we deduce that $(\u X,\u Y,\u Z)$ satisfies \eqref{fo:fbsde}. 
\end{proof} 

\subsection{Counter-Example to Uniqueness} 
We close this section with a counter-example showing that uniqueness cannot hold in general under assumptions (A1--3), even in the case $d=m=p=1$. Indeed, let us consider the forward-backward system
\begin{equation}
\label{eq:25:3:5}
\begin{split}
&dX_{t} = B(\EE(Y_{t})) dt + dW_{t},\qquad X_0=x_0,
\\
&dY_{t} = - F(\EE(X_{t})) dt + Z_{t} dW_{t},\qquad Y_{T} = G(\EE(X_{T})),
\end{split}
\end{equation}
where $B$, $F$ and $G$ are real valued bounded and Lipschitz-continuous functions on the real line. Let us also assume that they coincide with the identity on $[-R,R]$ for $R$ large enough. In other words, we assume that $B(x)=F(x)=G(x)=x$ for $|x|\le R$.  For $T=\pi/4$ and for any $A \in \RR$, the pair
\begin{equation*}
\begin{split}
&x_{t} = A  \sin(t),
\quad
y_{t} = A \cos(t), \quad 0 \leq t \leq T = \frac{\pi}{4},
\end{split}
\end{equation*}
satisfies
\begin{equation*}
\begin{split}
&\dot{x}_{t} = y_{t},
\quad \dot{y}_{t} = -x_{t}, \quad 0 \leq t \leq T,
\\
&y_{T} = x_{T},
\end{split}
\end{equation*}
with $x_{0}=0$ as initial condition. Therefore, for $\vert A \vert \leq R$, $(A \sin(t),A \cos(t))_{0 \leq t \leq T}$ is a solution to the deterministic forward-backward system
\begin{equation*}
\begin{split}
&\dot{x}_{t} = B(y_{t}),
\quad \dot{y}_{t} = - F(x_{t}), \quad 0 \leq t \leq T,
\\
&y_{T} = G(x_{T}),
\end{split}
\end{equation*}
with $x_{0}=0$ as initial condition. For such a value of $A$, set now:
\begin{equation*}
X_{t} = x_{t} + W_{t},
\quad
Y_{t} = y_{t}, \quad 0 \leq t \leq T.
\end{equation*}
Then, $(X,Y,0)$ solves 
\begin{equation*}
\begin{split}
&dX_{t} = B(\EE(Y_{t})) dt + dW_{t}
\\
&dY_{t} = - F(\EE(X_{t})) dt + 0 \ dW_{t},
\end{split}
\end{equation*}
with $X_{0}=0$ and $Y_{T} = G(\EE(X_{T}))$, so that uniqueness fails.

\begin{remark}
The reason for the failure of uniqueness can be explained as follows. In the standard framework, as explained in \cite{Delarue02}, uniqueness holds because of the smoothing effect of the diffusion operator in the spatial direction. However, in the McKean-Vlasov setting, the smoothing effect of the diffusion operator is ineffective in the direction of the measure variable. 
\end{remark}

\section{\textbf{Applications}}
\label{se:applications}

We now describe the set-up of the two applications mentioned in the introduction.
\vskip 2pt 
We define the set $\bA$ of admissible controls as the set of progressively measurable processes $\u \alpha=(\alpha_t)_{0\le t\le T}$ with values in a measurable space $(A,\cA)$. Typically, $A$ is a Borel subset of a Euclidean space $\RR^k$, and $\cA$ the $\sigma$-field induced by the Borel $\sigma$-field of this Euclidean space.
For each admissible control process $\u\alpha$ we consider the solution 
$\u X=(X_t)_{0\le t\le T}$ of the (nonlinear) stochastic differential equation of McKean-Vlasov type
\begin{equation}
\label{fo:mkvsde}
dX_t=b(t,X_t,\PXt, \alpha_t)dt+\sigma(t,X_t,\PXt,\alpha_t)dW_t\qquad 0\le t\le T,\quad X_0=x_0,
\end{equation}
where the drift $b$ and the volatility $\sigma$ are deterministic measurable functions $b:[0,T]\times\RR^d\times \cP_2(\RR^d)\times A\rightarrow \RR^{d}$ and  
$\sigma:[0,T]\times\RR^d\times \cP_2(\RR^d)\times A\rightarrow \RR^{d\times m}$ satisfying regularity conditions to be specified below. The stochastic optimization problem associated to the optimal control of McKean-Vlasov stochastic dynamics is to minimize the objective function
\begin{equation}
\label{fo:mkvobjective}
J(\alpha)=\EE\left\{\int_0^Tf(t,X_t,\PP_{X_t}, \alpha_t)dt+g(X_T,\PP_{X_T})\right\}  ,
\end{equation}
over a set $\u\alpha\in\bA$  where the running cost function $f$ is a real valued deterministic measurable function defined on $[0,T]\times\RR^d\times \cP_2(\RR^d)\times A$ and  the terminal cost function $g$  is an $\RR^d$--valued deterministic measurable function defined on $\RR^d\times \cP_2(\RR^d)$. Though different in its goal, the mean field problem as stated by Lasry and Lions is similar in many ways. The main difference comes from the fact that, because the ultimate goal is to identify approximate Nash equilibriums for large games, the stochastic optimization problem is to minimize the objective function
\begin{equation}
\label{fo:mfgobjective}
J(\alpha)=\EE\left\{\int_0^Tf(t,X_t,\mu_t, \alpha_t)dt+g(X_T,\mu_T)\right\}  ,
\end{equation}
over the same set $\bA$ of admissible control processes when the measure argument in the dynamics of the state, the running cost function $f$ and  the terminal cost function $g$  are fixed -- one often say \emph{frozen} to emphasize this difference -- and given by a deterministic flow $\u\mu=(\mu_t)_{0\le t\le T}$ of probability measures on $\RR^d$. The similarity comes from the fact that, once the optimization for $\u\mu$ fixed is accomplished, this flow of measures is determined in such a way that the marginal distribution of the solution of the stochastic differential equation
\begin{equation}
\label{fo:mfgsde}
dX_t=b(t,X_t,\mu_t, \alpha_t)dt+\sigma(t,X_t,\mu_t,\alpha_t)dW_t\qquad 0\le t\le T,\quad X_0=x_0,
\end{equation}
is $\mu_t$, in other words in such a way that $\mu_t=\PP_{X_t}$ for all $0\le t\le T$, forcing the optimally controlled state process $\u X$ to actually solve the McKean - Vlasov equation \eqref{fo:mkvsde}. If an appropriate stochastic version of the Pontryagin maximum can be proven, the probabilistic approach to these stochastic optimization problems is to introduce adjoint processes $(\u Y,\u Z)=(Y_t,Z_t)_{0\le t\le T}$ solving the \emph{adjoint} BSDE, and to couple these backward equations with the forward equations \eqref{fo:mfgsde} and \eqref{fo:mkvsde} by introducing a  control $\u\alpha$ minimizing a specific Hamiltonian function. This coupling of the forward and backward equations creates a FBSDE of the McKean-Vlasov type in which the marginal distributions of the solutions appear in the coefficients, and which needs to be solved in order to find a solution of the stochastic optimization problems. This was the motivation behind the statement of the problem considered in Section \ref{se:fbsde}.

\vskip 4pt
For the sake of completeness we assume that, in both applications, the drift coefficient $b$ and the volatility matrix $\sigma$ satisfy the following assumptions.
\begin{itemize}
\item[(S1)] \hskip 6pt The functions $(b(t,0,\delta_{0},0))_{0\le t\le T}$ and $(\sigma(t,0,\delta_{0},0))_{0\le t\le T}$ are continuous;
\item[(S2)] \hskip 6pt  $\exists c>0$, $\forall t\in[0,T]$,  $\forall \alpha\in A$,  $\forall x,x'\in\RR^d$,  $\forall \mu,\mu'\in\cP_2(\RR^d)$,
$$
|b(t,x,\mu,\alpha)-b(t,x',\mu',\alpha)| +  |\sigma(t,x,\mu,\alpha)-\sigma(t,x',\mu',\alpha)|\le c(|x-x'|+W_2(\mu,\mu'))
$$
\end{itemize} 
We further restrict the set $\bA$ to the set of progressively measurable processes $\u\alpha$ 
in ${\mathbb H}^{2,k}$. 
Together with the Lipschitz assumption (S2), this definition guarantees that, for any $\u\alpha\in\bA$, there exists a unique solution $\u X=\u X^{\u\alpha}$ of both \eqref{fo:mfgsde} and \eqref{fo:mkvsde} and that moreover, this solution satisfies
$$
\EE \bigl[ \sup_{0\le t\le T}|X_t|^q \bigr] <\infty
$$
for every $q \geq 1$. See for example \cite{Sznitman,JourdainMeleardWoyczynski} for a proof for the McKean-Vlasov equations.

\subsection{Solvability of FBSDE for Mean-Field Games}
\label{sub:mfg}
In this subsection, we apply the abstract existence result of Section \ref{se:fbsde} to the Mean Field Game (MFG) problem described in the introduction.
As in \cite{CarmonaDelarue2}, we assume that the volatility function is a constant matrix $\sigma$ (of size $d \times m$), but here we assume in addition that ${\rm det}(\sigma \sigma^{\dagger}) >0$. 
Since the stochastic optimization problem is solved after the flow of measures is frozen, for each fixed $\mu$, the Hamiltonian of the system reads:
\begin{equation}
\label{fo:mfg_hamiltonian}
H^\mu(t,x,y,\alpha)=b(t,x,\mu,\alpha)\cdot y + f(t,x,\mu,\alpha), \quad x,y \in \RR^d, \ \alpha \in A, \ \mu \in {\mathcal P}_{2}(\RR^k),
\end{equation}
`$\cdot$' standing for the inner product.
For each $\mu$, we assume the existence of a function $(t,x,y)\hookrightarrow\hat{\alpha}^\mu(t,x,y) \in A$ 
which  is Lipschitz-continuous with respect to $(x,y,\mu)$, uniformly in $t \in [0,T]$, the Lipschitz property holding true with respect to to the product distance of the Euclidean distances on $\RR^{d} \times \RR^d$ and the $2$-Wasserstein distance on ${\mathcal P}_{2}(\RR^d)$ and such that:
\begin{equation}
\label{fo:alphahat}
\hat{\alpha}^\mu(t,x,y) \in \textrm{argmin}_{\alpha \in \RR^d} H^\mu(t,x,y,\alpha), \quad t \in [0,T], \ x,y \in \RR^d, 
\ \mu \in {\mathcal P}_{2}(\RR^d).
\end{equation}
The existence of such a function was proven in \cite{CarmonaDelarue2} under specific assumptions on the drift $b$ and the running cost function $f$ used there.
Using a standard version of the stochastic maximum principle, and coupling the forward dynamics and the adjoint BSDE by plugging the optimizer \reff{fo:alphahat} lead to the solution, for each frozen flow $\u\mu=(\mu_t)_{0\le t\le T}$ of measures, of a standard FBSDE. Now, if we add the requirement that $\mu_t$ should coincide for each $t$ with the marginal distribution of the optimally controlled state, the solution of the MFG stochastic optimization problem reduces to the solution of the FBSDE of McKean - Vlasov type
\begin{equation}
\label{fo:mfg_fbsde}
\begin{split}
&dX_{t} = b \bigl( t,X_{t},\PP_{X_{t}},\hat{\alpha}(t,X_{t},Y_{t},\PP_{X_{t}})\bigr) dt + \sigma dW_{t},
\\
&dY_{t} = - \partial_{x} f\bigl(t,X_{t},\PP_{X_{t}},\hat{\alpha}(t,X_{t},Y_{t},\PP_{X_{t}}) \bigr) dt 
 - \partial_{x} b\bigl(t,X_{t},\PP_{X_{t}},\hat{\alpha}(t,X_{t},Y_{t},\PP_{X_{t}}) \bigr) \cdot Y_t dt + Z_{t}dW_{t},
\end{split}
\end{equation}
with initial condition $X_{0}=x_{0}$ for a given deterministic point $x_{0} \in \RR^d$, and terminal condition $Y_{T} = \partial_{x} g(X_{T},\PP_{X_{T}})$.  
Existence of a solution for this McKean-Vlasov FBSDE follows by applying Theorem \ref{th:mkv_fbsde} if we assume that $\partial_{x} f$ and $\partial_{x} g$ are bounded and Lipschitz-continuous and set: 
\begin{equation*}
\begin{split}
&B\bigl(t,X_{t},Y_{t},Z_{t},\PP_{(X_{t},Y_{t})}\bigr) = b(t,X_{t},\PP_{X_{t}},\alpha_t),
\\
&F\bigl(t,X_{t},Y_{t},Z_{t},\PP_{(X_{t},Y_{t})}\bigr) = \partial_{x} f\bigl(t,X_{t},\PP_{X_{t}},\hat{\alpha}(t,X_{t},Y_{t},\PP_{X_{t}}) \bigr)\\
&\phantom{??????????????????????????????}+\partial_{x} b\bigl(t,X_{t},\PP_{X_{t}},\hat{\alpha}(t,X_{t},Y_{t},\PP_{X_{t}}) \bigr) \cdot Y_{t},
\\
&G(X_{T},\PP_{X_{T}}) = \partial_{x} g(X_{T},\PP_{X_{T}}).
\end{split}
\end{equation*}
While the result of the present note provides existence in quite a general set up, \cite{CarmonaDelarue2} also 
allows for running costs with at most linear growth in $x$
(and provides much more in terms of the identification of approximate Nash equilibriums). However, the drift $b$ needs to be of a very specific \emph{affine form}, namely $b(t,x,y,\alpha,\mu)=b_0(t,\mu)+b_1(t)x+b_2(t)\alpha$ for some deterministic functions $b_0$, $b_1$ and $b_2$, and the running  cost function $f$ has to satisfy a strong convexity assumption, the latter
 allowing $\sigma$ to be degenerate, providing a more efficient approximation procedure to reduce the result to the bounded case and ensuring the validity of the converse of the stochastic maximum principle.

\subsection{Optimal Control of McKean-Vlasov Stochastic Dynamics}
\label{sub:mkv_control}
Finally, we explain how the existence result of this paper generalizes the existence result of \cite{CarmonaDelarue3} where a solution of the optimal control of stochastic differential equations of the  McKean-Vlasov type is given.

As before, we assume that the volatility function is a constant matrix $\sigma$ such that 
${\rm det}(\sigma \sigma^{\dagger}) >0$, so the Hamiltonian has the form 
$$
H(t,x,y,\mu,\alpha) = b(t,x,\mu,\alpha)\cdot y  + f(t,x,\mu,\alpha),
$$
for $t \in [0,T]$, $x,y \in \RR^d$, $\alpha \in A$ and $\mu \in {\mathcal P}_{2}(\RR^d)$. 
Again, we assume the existence of a function $(t,x,y,\mu)\hookrightarrow\hat{\alpha}(t,x,y,\mu) \in A$ 
which  is Lipschitz-continuous with respect to $(x,y,\mu)$, uniformly in $t \in [0,T]$, the Lipschitz property holding true with respect to the product distance of the Euclidean distances on $\RR^{d} \times \RR^d$ and the $2$-Wasserstein distance on ${\mathcal P}_{2}(\RR^d)$ and such that:
\begin{equation}
\label{fo:alphahat'}
\hat{\alpha}(t,x,y,\mu) \in \textrm{argmin}_{\alpha \in \RR^d} H(t,x,y,\mu,\alpha), \quad t \in [0,T], \ x,y \in \RR^d, 
\ \mu \in {\mathcal P}_{2}(\RR^d).
\end{equation}
The existence of such a function was proven in \cite{CarmonaDelarue3} under specific assumptions on the drift $b$ and the running cost function $f$ used there. However, the major difference with the mean field game problem comes from the form of the adjoint equation which now involves differentiation of the Hamiltonian with respect to the measure parameter. A special form of adjoint equation was introduced, and a new stochastic maximum principle was proven in \cite{CarmonaDelarue3}. Once this new form of adjoint equation is coupled with the forward dynamical equation through the plugged-in optimal control feedback $\hat \alpha$ defined in \reff{fo:alphahat'}, the associated McKean-Vlasov FBSDE takes the form
\begin{equation}
\label{eq:24:12:1:b}
\begin{split}
&dX_{t} = b \bigl( t,X_{t},\PP_{X_{t}},\hat{\alpha}(t,X_{t},Y_{t},\PP_{X_{t}})\bigr) dt + \sigma dW_{t},
\\
&dY_{t} = - \partial_{x} f\bigl(t,X_{t},\PP_{X_t},\hat{\alpha}(t,X_{t},Y_{t},\PP_{X_{t}}) \bigr) dt \\
&\hspace{30pt} - \partial_{x} b\bigl(t,X_{t},\PP_{X_t},\hat{\alpha}(t,X_{t},Y_{t},\PP_{X_{t}}) \bigr) \cdot Y_t  dt 
+ Z_{t}dW_{t}\\
&\hspace{30pt} - \tilde{\mathbb E} \bigl[\partial_{\mu} f\bigl(t,\tilde{X}_{t},\PP_{X_{t}},\hat{\alpha}(t,\tilde{X}_{t},\tilde{Y}_{t},\PP_{X_{t}}) \bigr)(X_{t}) \bigr] dt\\
&\hspace{30pt} - \tilde{\mathbb E} \bigl[\partial_{\mu} b\bigl(t,\tilde{X}_{t},\PP_{X_{t}},\hat{\alpha}(t,\tilde{X}_{t},\tilde{Y}_{t},\PP_{X_{t}}) \bigr)(X_{t})\cdot Y_t\bigr] dt, 
\end{split}
\end{equation}
with initial condition $X_{0}=x_{0}$ for a given deterministic point $x_{0} \in \RR^d$, and terminal condition $Y_{T} = \partial_{x} g(X_{T},\PP_{X_{T}}) + \tilde{\EE}[\partial_{\mu} g(\tilde{X}_{T},\PP_{X_{T}})(X_{T})]$.  Above, the tilde refers to an independent copy of $(X_t, Y_t, Z_t)$.
The notations $\partial_{\mu} b(t,x,\PP_{\xi},\alpha)(\xi)$ and 
$\partial_{\mu} f(t,x,\PP_{\xi},\alpha)(\xi)$, for an $\RR^d$--valued random variable $\xi$ of order $2$, 
represent  the derivatives of $b$ and $f$ with respect to the measure argument at $(t,x,\PP_{\xi},\alpha)$
according to the rule of differentiation introduced by P.L. Lions as explained in \cite{Cardaliaguet}: the derivatives are represented by random variables, obtained by plugging the current value of $\xi$ into some functions $\partial_{\mu} b(t,x,\PP_{\xi},\alpha)(\,\cdot\,)$ and $\partial_{\mu} f(t,x,\PP_{\xi},\alpha)(\,\cdot\,)$.  We chose the order in which the various terms appear in the right hand side of the backward equation in order to emphasize the last two terms which are not present in standard forms of adjoint BSDEs. Because of these two last terms, the coefficients depend on the joint law of $(X_{t},Y_{t})$, which is not the case in \eqref{fo:mfg_fbsde}. Despite this non-standard form of the FBSDE, the existence result of this note applies (under appropriate conditions on the coefficients). Under very restrictive conditions on the drift $b$ and a strict convexity assumption on $f$, \cite{CarmonaDelarue3} provides uniqueness, allows for at most linear growth in the variable $x$ and guarantees the validity of the converse of the maximum principle.

\bibliographystyle{plain}

\end{document}